\let\inf\relax \DeclareMathOperator*\inf{\vphantom{p}inf}
\let\lim\relax \DeclareMathOperator*\lim{\vphantom{p}lim}
\let\max\relax \DeclareMathOperator*\max{\vphantom{p}max}
\newtheorem{theorem}{Theorem}%[section]
\newcounter{dummy} \numberwithin{dummy}{section}
\newtheorem{proposition}[dummy]{Proposition}
\newtheorem{lemma}[dummy]{Lemma}
\newtheorem{definition}[dummy]{Definition}
\newenvironment{proof}{\noindent\\ \noindent{\sc    Proof}}{{\samepage\par\nopagebreak\hbox to\hsize{\hfill$\Box$}}}
\newcommand{\be}{\begin{equation}}
\newcommand{\ee}{\end{equation}}
\numberwithin{equation}{section}
\newcommand{\rP}{\mathrm{P\space }} %probability, when spacing isn't a worry
\newcommand{\rE}{\mathrm{E\space }} %expectation, when spacing isn't a worry
\newcommand{\rV}{\mathrm{Var\hspace{0.2mm}}} %expectation, when spacing isn't a worry
\begin{document}

\title{Critical Galton-Watson processes with overlapping generations}

\author{ Serik Sagitov \\
Chalmers University of Technology and University of Gothenburg}

\date{} 
\maketitle

\begin{abstract}
A properly scaled critical Galton-Watson process converges to a continuous state critical branching process $\xi(\cdot)$  as the number of initial individuals tends to infinity.
 We extend this classical result by allowing for overlapping generations and considering a wide class of population counts.
The main result of the paper establishes a convergence of the finite dimensional distributions for a scaled vector of multiple population counts. The set of the limiting distributions is conveniently represented in terms of integrals $(\int_0^y\xi(y-u)du^\gamma, y\ge0)$ with a pertinent $\gamma\ge0$. 
\end{abstract}

\tableofcontents

\section{Introduction}\label{sec:int}

One of the basic stochastic population model of a self-reproducing system is build upon two assumptions
\begin{description}
\item[]  (A) different individuals live independently from each other according to the same individual life law described in (B),
\item[] (B) an individual dies at age one and at the moment of death gives birth to a random number $N$ of offspring.
\end{description}
Within this model, the numbers of individuals $Z_0, Z_1,\ldots$, born at times $t=0,1,\ldots$, form a Markov chain, whose transition probabilities are fully described by the distribution of the offspring number $N$. The Markov chain $\{Z_t, t\ge0\}$ is usually called a Galton-Watson process, or \textit{GW-process} for short.
A GW-process is classified as subcritical, critical, or supercritical, depending on whether the mean offspring number $\rE(N)$ is less than, equal to, or larger than the critical value 1.

%In view of  
%\[\rE(Z_t\,|\,Z_0)=Z_0\mu^t,\quad t=1,2,\ldots,\] 
%where $\mu=\rE(N)$ is the mean offspring number, there is a crucial distinction between three regimes of reproduction:  subcritical, critical, and supercritical, depending on whether $\mu<1$,  $\mu=1$, or $\mu>1$.
%In the subcritical case,  the Markov chain $Z_0, Z_1,\ldots$ gets quickly absorbed at zero.
% A supercritical Galton-Watson process grows exponentially, however, it may also experience a quick extinction. In the critical case, unless $\rV(N)=0$, extinction is inevitable, but it occurs much slower than in the subcritical case. 

%Due to the martingale property
%\[\rE(Z_{t+1}\,|\,Z_t)=Z_t,\quad t\to\infty, \]
%saying that the mean population size stays constant over time $\rE(Z_t\,|\,Z_0)=Z_0$. %Such a process starting from a large number $Z_0=n$ individuals, will stay alive for a long time proportional to  $n$. More precisely,
%If 
%\begin{equation}
%  \label{crit}
%  \mu=1,\quad 0<\sigma<\infty, 
%\end{equation}
%then it is known that for a fixed  $u\ge0$, there holds a convergence in distribution
%\begin{equation}
%  \label{cg1}
% (n^{-1}Z_{nu}\,|\,Z_0=n)\to \xi(u),\quad n\to \infty,
%\end{equation}
%where the limiting distribution has the log-Laplace transform
%\begin{equation}
%  \label{cg1}
%-\ln\rE(e^{-\lambda \xi(u)})= (\lambda^{-1}+\tfrac{u\sigma^2}{2} )^{-1},\quad \lambda \ge0,\quad u\ge0.
%\end{equation}
It is known, that in the critical case, with
\begin{equation}
  \label{crit}
  \rE(N)=1,\quad \rV(N)=2b, \quad b<\infty, 
\end{equation}
the  finite dimensional distributions (fdd's) of a properly  scaled GW-process converge
\begin{equation}
\label{cgw}
\{n^{-1}Z_{nu},\ u\ge0\,|\,Z_0=n\}\stackrel{\rm fdd\ }{\longrightarrow}\{\xi(u),\ u\ge0\,|\,\xi(0)=1\},\ n\to \infty,
\end{equation}
and the limiting fdd's are represented by a continuous state branching process $\xi(\cdot)$, which is a continuous time Markov process with a transition law determined by
\begin{align}
  \label{cbp}
  \rE\big(e^{-\lambda\xi(v+u)}\,|\,\xi(v)=x\big)=e^{-\frac{\lambda x}{1+\lambda bu}} \, ,\quad v, u, x, \lambda\ge0.
\end{align}
Note how the parameter $b$ acts as a time scale: the larger is the variance of $N$, the faster is changing the population size.

%This formula for the log-Laplace transform implies 
%\[ \rE(\xi(u)\,|\,\xi(0)=x)=x, \quad \rV(\xi(u)\,|\,\xi(0)=x)=2bxu.\]
%In particular, if $b=0$, the process in question is simply a constant.

In this paper, we study $\{Z(t), t\ge0\}$,  a Galton-Watson process with overlapping generations, or \textit{GWO-process} for short, where $Z(t)$ is the number of individuals alive at time $t$ in a reproduction system satisfying the following two assumptions 
 \begin{description}
\item[]  (A$^*$) different individuals live independently from each other according to the same individual life law described in (B$^*$),
\item[] (B$^*$) an individual lives $L$ units of time and gives $N$ births at random ages $\tau_1,\ldots,\tau_N$, satisfying
\begin{equation}
  \label{ir}
1\le\tau_1\le \ldots\le \tau_N\le L.
\end{equation}
\end{description}
Assumption (B$^*$) allows for overlapping generations, when mothers may coexist with their daughters.  %unless $\rP(L=1)=1$, when (B) and (B$^*$) are equivalent. 
We focus on the critical case \eqref{crit}  and aim at an extension of \eqref{cgw} to the GWO-processes. 

%\begin{figure}
%\begin{center}
% \includegraphics[width=8cm]{Over.pdf}
%  \caption{Four overlapping generations: first generation in red, second generation in green, third in blue, and fourth in black. For this particular realisation, we have the following consecutive numbers of newborns:\ \ $Z_0=2$, $Z_1=1$, $Z_2=0$, $Z_3=Z_4=3$, $Z_5=0$, $Z_6=1$, $Z_7=0$. At the same time, the numbers of individuals alive are $Z(0)=Z(1)=Z(2)=2$, $Z(3)=Z(4)=3$, $Z(5)=Z(6)=Z(7)=2$.
% } \label{fig}
%\end{center}
%\end{figure}
%

The process $\{Z(t), t\ge0\}$, being non-Markov in general, is studied with help of an associated renewal process, introduced in Section \ref{psm}.  The mean inter-arrival time 
\begin{align}
  \label{aha}
a:=\rE(\tau_1+\ldots+\tau_N)
 \end{align}
of this renewal process gives us the average generation length. 
 It is important to 
distinguish between the average generation length $a$, which in this paper will be assumed finite, and the average life length
$\mu:=\rE(L)$, allowed to be infinite.

With a more sophisticated reproduction mechanism \eqref{ir}, there are many interesting population counts to study, alongside the number of newborns $Z_t$ and the number of individual alive $Z(t)$ at the time $t$. (For GW-processes, $a=1$ and $Z(t)$ equals $Z_t$, since all alive individuals are newborn.) %Figure \ref{fig} illustrates the case of overlapping generations. 
An interesting case of population counts is treated by Theorem \ref{deco} dealing with decomposable multitype GW-processes. Theorem \ref{deco} is obtained as an application of the main results of the paper, Theorems \ref{nrt}, \ref{nrtg}, \ref{nrtm}, stated and proven in Section \ref{sec:lt}. The following three statements are straightforward corollaries of our Theorems \ref{nrt}, \ref{nrtg}, and \ref{nrtm} respectively. In these theorems, it is always assume that the GWO-process stems from a large number  $Z_0=n$ of progenitors born at time zero.  
\\

\noindent\textbf{Corollary 1}. 
\textit{
Consider a GWO-process satisfying (\ref{crit}) and $a<\infty$. If $\mu<\infty$, then }
 $$\{n^{-1}Z(nu),\ u>0\,|\,Z_0=n\}\stackrel{\rm fdd\ }{\longrightarrow}\{\mu a^{-1}\xi(ua^{-1}),\ u>0\,|\,\xi(0)=1\},\quad n\to\infty.$$

 \noindent\textbf{Corollary 2}. 
\textit{ Consider a GWO-process satisfying (\ref{crit}) and $a<\infty$. If $\mu=\infty$, and for some  slowly varying function at infinity}  $\mathcal{L}(\cdot)$,
\begin{equation}\label{heavy}
\sum\nolimits_{j=0}^t \rP(L>j)=t^\gamma\mathcal{L}(t),\quad 0\le\gamma\le1,\quad t\to\infty,
\end{equation}
\textit{ then, as}  $n\to\infty$,
$$\{n^{-1-\gamma}\mathcal{L}^{-1}(n)Z(nu),\ u>0\,|\,Z_0=n\}\stackrel{\rm fdd\ }{\longrightarrow}\{a^{\gamma-1}\xi_\gamma(ua^{-1}),\ u>0\,|\,\xi(0)=1\}.$$

\noindent\textbf{Corollary 3}. 
\textit{
Consider a GWO-process satisfying \eqref{crit}, $a<\infty$, and \eqref{heavy}. Then, as }$n\to\infty$,
 $$\{(n^{-1-\gamma}\mathcal{L}^{-1}(n)Z(nu),n^{-1}Z_{nu}),\ u>0\,|\,Z_0=n\}\stackrel{\rm fdd\ }{\longrightarrow}\{(a^{\gamma-1}\xi_\gamma(ua^{-1}), a^{-1}\xi(ua^{-1})),\ u>0\,|\,\xi(0)=1\}.$$

Notice that condition \eqref{heavy} holds even in the case $\mu<\infty$, with $\gamma=0$ and $\mathcal{L}(t)\to\mu$ as $t\to\infty$. The family of processes $\{\xi_\gamma(\cdot)\}_{\gamma\ge0}$ emerging in our limit theorems can be expressed in the integral form
\begin{align}
  \label{asmaa}
\xi_0(u):=\xi(u),\text{ for } \gamma=0, \text{ and }\xi_\gamma(u):=\int_0^u\xi(u-v)dv^\gamma \text{ for }\gamma>0, \quad u\ge0,
\end{align}
which  is treated as a convenient representation of the limiting fdd's, see Section \ref{secta}.

The following remarks comment on relevant literature and mention an interesting  open problem.
\begin{enumerate}
\item  The GW-process is a basic model of the biologically motivated  theory of branching processes, see \cite{AN}, \cite{HJV}. The critical GW-process can be viewed as a stochastic  model of a sustainable reproduction, when a mother produces on average one daughter, see \cite{Kim}.  
\item The GWO-process is a discrete time version of the so-called general branching process, often called the Crump-Mode-Jagers process, see \cite{HJV}, \cite{J},  \cite{JS1}, \cite{ZT}.
\item The fruitful concept of population counts, allowing for a variety of individual scores, see Section \ref{psm}, was first introduced in \cite{J74}. The interested reader may find several demographical examples of population counts  in \cite{J74} and \cite{J}.
\item Above mentioned Theorem  \ref{deco} deals with the decomposable critical multitype  GW-processes.
In a more general setting, such processes were studied in \cite{FN}, addressing related issues by applying a different approach.
\item Compared to earlier attempts, see  \cite{V86}, \cite{V89}, and especially \cite{90}, the current treatment of critical age-dependent branching  processes is made more accessible by restricting the analysis to the case of finite $\rV(N)$ and  $a$, as well as focussing on the discrete time setting.
\item Our proofs do not use \eqref{cgw} as a known fact (unlike for example \cite{H2}, addressing a related problem). Therefore, convergence \eqref{cgw} can be derived from  the  above mentioned Corollary 1.
\item The branching renewal approach, introduced in Section \ref{sec:bre}, takes its origin in \cite{G}. 
\item The idea of studying branching processes starting from a large number of individuals is quite old, see \cite{Sev1} and especially \cite{L}. For a most recent paper in the continuous time setting, see \cite{MM}.
\item The definitions and basic properties of slowly and regularly varying functions, used in this paper, can be found in \cite{F2}. We apply some basic facts of the renewal theory from \cite{F1}.
\item Our limit theorems are stated in terms of the fdd-convergence. Finding simple conditions on the individual scores, ensuring weak convergence in the Skorokhod sense, is an open problem.
\end{enumerate}

\subsection*{Notational agreements}

\begin{enumerate}
\item To avoid confusion, we set apart discrete and continuous variables:
$$i, j, k, l, n, p,q, s, t \in\mathbb Z=\{0,\pm1,\pm2,\ldots\},\qquad u, v, x, y, z, \lambda\in[0,\infty).$$
Mixed products are treated as integer numbers, so that $nu$ stands for $\lfloor nu\rfloor$. The latter results in $\tfrac{nu}{n}$ not always being equal to $u$.

\item We distinguish between  a stronger  and a weaker forms of the uniform convergence
$$f^{(n)}(y)\stackrel{y\,}{\Rightarrow}f(y),\qquad f^{(n)}(y)\stackrel{y}{\to}f(y),\quad n\to\infty,$$
which respectively require the relations
$$\sup_{0\le y\le y_1}\,|\,f^{(n)}(y)-f(y)\,|\,\to0,\qquad \sup_{y_0\le y\le y_1}\,|\,f^{(n)}(y)-f(y)\,|\,\to0,\quad n\to\infty,$$ 
to hold for any  $0<y_0<y_1<\infty$.

\item We will write
\[\rE_n(\cdot):=\rE(\cdot\,|\,Z_0=n)\]
to say that the expected value is computed under the assumption that the GWO-process starts from $n$ individuals born at time 0.
With a little risk of confusion, we will also write 
\[\rE_x(\cdot):=\rE(\cdot\,|\,\xi(0)=x),\]
when the expectation deals with the finite dimensional distributions of the continuous state branching process $\xi(\cdot)$.
\item We will often use the following two shortenings
\[e_1^x:=1-e^{-x},\qquad e_2^x:=x-e_1^x=e^{-x}-1+x.\]
Note that both these functions are increasing, and for $0\le x\le y$,
\begin{align}
& 0\le e_1^y-e_1^x\le y-x,\qquad 0\le e_2^x\le \min(x, \tfrac{1}{2}x^2),  \label{e12}\\
& e_1^{x+y}=e_1^x+e_1^y-e_1^xe_1^y,\qquad e_2^{x+y}=e_2^x+e_2^y+e_1^xe_1^y.\label{e1}
\end{align}

\item In different formulas, the symbols $C,C_1,C_2, c, c_1, c_2$ represent different positive constants.

\end{enumerate}

\section{Population counts}\label{psm}
The number of individuals  alive at time $t$ can be counted as the sum of individual scores
\[
Z(t)=\sum_{j=0}^t\sum_{k=1}^{Z_j}1_{\{j\le t<j+L_{jk}\}}=\sum_{j=0}^t\sum_{k=1}^{Z_j}\chi_{jk}(t-j),
\]
where $L_{jk}$  is the  life length of an individual born at time $j$, and $\chi_{jk}(t)=1_{\{0\le t<L_{jk}\}}$ is its individual score. In this case, the individual score is 1, if the individual is alive at time $t$, and 0 otherwise. This representation leads to the next definition of a population count.

\begin{definition}\label{def1}
For a progenitor of the GWO-process, define its individual score as a vector $(\chi(t))_{t\in\mathbb Z}$ with non-negative, possibly dependent components, such that $\chi(t)=0$ for all $t<0$. This random vector is allowed to depend on the individual characteristics   \eqref{ir}, but it is assumed to be independent from such characteristics of other individuals,

 Define a population count $X(t)= X^{[\chi]}(t)$ as the sum of  time shifted individual scores 
\begin{equation}
 X(t):=\sum_{j=0}^t\sum_{k=1}^{Z_j}\chi_{jk}(t-j),\quad t\in\mathbb Z,
\label{X}
\end{equation}
assuming that the individual scores $(\chi_{jk}(t))_{t\in\mathbb Z}$ are 
%functions of individuals' ages
%\[(\chi_{jk}(0),\chi_{jk}(1),\chi_{jk}(2),\ldots)\stackrel{\rm fdd}{=}(\chi(0),\chi(1),\chi(2),\ldots)\]
 independent copies of $(\chi(t))_{t\in\mathbb Z}$.
\end{definition}
%\subsection{Examples of population counts}\label{sec:ic}

\subsection{The litter sizes}\label{omg}

%%As before, denote by $Z_t$ the number of individuals born at time $t$ stemming from $Z_0$ ancestors born at time 0. In general, $Z_0, Z_1,\ldots$ is not a Markov chain. 
%

%Usually, it will be assumed that the reproduction process starts from a single individual born at time 0, so that $Z(0)=1$.

In terms of \eqref{ir}, the litter sizes of a generic individual are defined  by 
$\nu(t):=\sum_{j=1}^N1_{\{\tau_j=t\}}$, $t\ge1$, so that $\nu(1)+\ldots+\nu(L)=N$. On the other hand, given a random infinite dimensional vector 
\be\label{alt}
(L,\nu(1),\nu(2),\ldots),\quad L\ge1,\ \nu(t)\ge0,\ t\ge1,
\ee
where $\nu(t)$ is treated the litter size at age $t$ for an individual with the life length $L$, the consecutive ages at childbearing can be found as
$$\tau_j=\sum\nolimits_{t=1}^Lt1_{\{N(t-1)<\tau_j\le N(t)\}},\quad N(t):=(\nu(1)+\ldots+\nu(t))1_{\{L\ge t\}},$$
where $N(t)$ is the number of daughters produced by a mother of age $t$.

In the critical case, the probabilities
$$A(t):=\rE(\nu(t) 1_{\{L\ge t\}}),\quad t\ge1,$$
sum up to one, since
$\sum_{t\ge1} A(t)=\rE( \nu(1)+\ldots+\nu(L))=\rE(N)=1.$
A renewal process with inter-arrival times having distribution $A(1),(A(2),\ldots$ plays a crucial role in the analysis of the critical GWO-processes. Observe that the corresponding mean  inter-arrival time is indeed given by \eqref{aha}:
\[ \sum_{t=1}^\infty tA(t)=\rE\Big(\sum_{t=1}^\infty t\nu(t) 1_{\{L\ge t\}}\Big)=\rE\Big(\sum_{t=1}^\infty t\sum_{j=1}^N1_{\{\tau_j=t\}}\Big)=\rE\Big(\sum_{j=1}^N\sum_{t=1}^\infty t1_{\{\tau_j=t\}}\Big)=\rE(\tau_1+\ldots+\tau_N)=a.\]

\subsection{Associated renewal process}

In the GWO setting with $Z_0=1$, the process $Z_t$ conditioned on $\{N(t)=k\}$, can be viewed as the sum of $k$ independent daughter copies
$Z_t=Z^{(1)}_{t-\tau_j}+\ldots+Z^{(k)}_{t-\tau_{k}}$.
This branching property implies that the expected number of newborns  $U(t):=\rE_1(Z_t)$ satisfies  a recursive relation
\[U(t)=\rE\Big(\sum_{j=1}^{N(t)}U(t-\tau_j)\Big)=\rE\Big(\sum_{k=1}^tU(t-k)\nu(k) 1_{\{L\ge k\}}\Big)=U*A(t),\quad t\ge1,\]
where the $*$ symbol stands for a discrete convolution 
$$A_1*A_2(t):=\sum\nolimits_{j=-\infty}^\infty A_1(t-j)A_2(j),\quad t\in\mathbb Z.$$

Resolving the obtained recursion $U(t)=1_{\{t=0\}}+U*A(t),$
we find a familiar expression for the renewal function
\begin{equation}
  \label{Ut}
U(t)=1_{\{t=0\}}+\sum\nolimits_{k=1}^t A^{*k}(t),\quad  A^{*1}(t):=A(t),\ \ A^{*(k+1)}(t):=A^{*k}*A(t),
\end{equation}
so that by the elementary renewal theorem,
\begin{equation}
  \label{ert}
U(t)\to1/a,\quad t\to\infty.
\end{equation}
This says that in the long run, the underlying reproduction process produces one birth per $a$ units of time. In this sense, $a$ can be treated as the average generation length.

Later on, we will need the following facts concerning  the distribution of $W_t$, the waiting time to the next renewal event  %. The probability mass function of  $W_t$,
 \[
R_t(j):=\rP(W_t=j),\quad j\ge1,\ t\ge0.
\]
These probabilities satisfy  the renewal equation
$R_t(j)=A(t+j)+R_t*A(t),$
which yields 
\begin{equation}
  \label{rtj}
R_t(j)=\sum\nolimits_{k=0}^tA(t+j-k)U(k),\ j\ge1,\ t\ge0.
\end{equation}
By the key renewal theorem, there exists a stable distribution of the residual time $W_t$, in that
\begin{equation}\label{overshot}
 R_t(j)\to R(j),\quad t\to \infty,\quad   R(j):=a^{-1}\sum\nolimits_{k=j}^\infty A(k),\quad j\ge1.
\end{equation}

\begin{lemma}\label{rty} Assume \eqref{crit}, $a<\infty$, and suppose a family of non-negative functions $r^{(n)}(t)$ is such that 
\[\sup_{n\ge1,t\ge1}r^{(n)}(t)<\infty,\qquad r^{(n)}(ny)\stackrel{y\, }{\Rightarrow} r(y),\quad n\to\infty.\]
If $r(y)\to r(0)$ as $y\to0$, then
\[\sum\nolimits_{t=1}^\infty r^{(n)}(t)R_{ny}(t)\stackrel{y}{\to} r(0),\ n\to\infty.\]
\end{lemma}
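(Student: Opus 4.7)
\medskip
\noindent\textbf{Proof plan.}
The guiding intuition is that the residual waiting time $W_{ny}$ (whose law is $R_{ny}(\cdot)$) is asymptotically stable as $n\to\infty$, see \eqref{overshot}, so it stays of order one and is negligible compared with $n$. Consequently $r^{(n)}(W_{ny})$ effectively samples $r^{(n)}(ns)$ at values of $s$ very close to zero, and there the uniform convergence in the hypothesis together with continuity of $r$ at the origin force the outcome to be close to $r(0)$.

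To make this a uniform-in-$y$ argument, I would use that $R_{ny}(\cdot)$ is a probability distribution on $\{1,2,\ldots\}$ to rewrite
\[
\sum_{t=1}^\infty r^{(n)}(t)R_{ny}(t) - r(0) \;=\; \sum_{t=1}^\infty \bigl(r^{(n)}(t) - r(0)\bigr)R_{ny}(t),
\]
then fix $\epsilon>0$, choose $\delta>0$ so small that $|r(s)-r(0)|<\epsilon/2$ on $[0,\delta]$, and split at $t=\delta n$. On the head $1\le t\le \delta n$, applying the hypothesis $r^{(n)}(ny)\stackrel{y\,}{\Rightarrow} r(y)$ on $[0,\delta]$ and then using continuity of $r$ at $0$ yields $|r^{(n)}(t)-r(0)|<\epsilon$ uniformly for $t\le\delta n$ and $n$ large, so the head contributes at most $\epsilon$ regardless of $y$. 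On the tail $t>\delta n$, the assumed bound $M:=\sup_{n,t\ge1}r^{(n)}(t)<\infty$ reduces everything to showing
\[
\sup_{y\in[y_0,y_1]}\sum_{t>\delta n}R_{ny}(t)\longrightarrow 0,\qquad n\to\infty.
\]

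This tail estimate is the main obstacle, because the key renewal input \eqref{overshot} delivers only pointwise convergence in $y$. My plan is to bypass it by using \eqref{rtj} directly: interchanging summation order and substituting $m=ny+j-k$ gives
\[
\sum_{j>\delta n}R_{ny}(j) \;=\; \sum_{k=0}^{ny}U(k)\,\bar A(ny-k+\delta n),\qquad \bar A(s):=\sum_{m>s}A(m).
\]
Since $U$ is bounded (it converges to $1/a$ by \eqref{ert}), the right-hand side is dominated by a constant multiple of $\sum_{i\ge 0}\bar A(i+\delta n)=\sum_{s\ge\delta n}\bar A(s)$, a quantity that does not depend on $y$. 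The identity $\sum_{s\ge 0}\bar A(s)=\sum_{m\ge 1}mA(m)=a<\infty$ then makes this the tail of a convergent series and hence $o(1)$, delivering the required uniform-in-$y$ bound. Letting $\epsilon\to 0$ after $n\to\infty$ completes the plan.
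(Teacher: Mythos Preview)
Your argument is correct. The paper's proof, however, takes a slightly different route: it splits the sum at a \emph{fixed} cutoff $t_0$ (independent of $n$), handles the finite head via $r^{(n)}(t)-r(t/n)\to0$ and $r(t/n)\to r(0)$, and controls the tail by the key renewal theorem \eqref{overshot}, using $\sum_{t>t_0}R_{ny}(t)\to\sum_{t>t_0}R(t)$ and then sending $t_0\to\infty$. Your split at $\delta n$ instead forces the genuinely new work into the tail, which you then bound directly from the explicit formula \eqref{rtj} and the identity $\sum_{s\ge0}\bar A(s)=a<\infty$. The payoff of your approach is that the uniform-in-$y$ tail estimate is obtained by an elementary computation that never invokes the stationary overshoot distribution, whereas the paper's version is shorter once \eqref{overshot} is taken for granted (the uniformity in $y$ there being automatic because $R_{ny}(\cdot)$ depends on $y$ only through the integer $ny\to\infty$).
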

\begin{proof} Observe that
\[\sum_{t=1}^{\infty} r^{(n)}(t)R_{ny}(t)-r(0)=\sum_{t=1}^{t_0} (r^{(n)}(t)-r(0))R_{ny}(t)+\sum_{t=t_0+1}^{\infty} (r^{(n)}(t)-r(0))R_{ny}(t)\]
for any  $t_0>0$. From
\[ \sum_{t=1}^{t_0} (r^{(n)}(t)-r(0))R_{ny}(t)= \sum_{t=1}^{t_0} (r^{(n)}(t)-r(tn^{-1}))R_{ny}(t)+\sum_{t=1}^{t_0} (r(tn^{-1})-r(0))R_{ny}(t), \]
we deduce
\[ \sum\nolimits_{t=1}^{t_0} (r^{(n)}(t)-r(0))R_{ny}(t)\stackrel{y}{\to} 0,\ n\to\infty, \]
using the assumptions on $r^{(n)}(\cdot)$ and $r(\cdot)$. It remains to notice that
\[\sum_{t=t_0+1}^{\infty} |r^{(n)}(t)-r(0)|R_{ny}(t)\le C\sum_{t=t_0+1}^{\infty} R_{ny}(t),\]
and
$\sum_{t=t_0+1}^{\infty} R_{ny}(t)\stackrel{y}{\to} \sum_{t=t_0+1}^{\infty} R(t)\to0$ as first $t\to\infty$ and then $t_0\to\infty$.
\end{proof}

\subsection{Expected population counts}

If $Z_0=1$, %meaning that the reproduction process starts with  a single individual born at time 0. Then 
then $X(t)$, defined by \eqref{X}, can be represented as 
\begin{equation}\label{basdec}
    X(t)=\chi(t)+\sum\nolimits_{j=1}^{N(t)}X^{(j)}(t-\tau_j)
\end{equation}
in terms of the independent daughter processes $X^{(j)}(\cdot)$.
Taking expectations, 
we arrive at a recursion
\[M(t)= m(t)+\rE\Big(\sum_{j=1}^{N(t)}M(t-\tau_j)\Big)= m(t)+\sum_{j=1}^{t}M(t-j)A(j),\]
where
$M(t):=\rE_1(X(t))$, $m(t):=\rE(\chi(t))$. 
This renewal equation
$M(t)=m(t)+M*A(t)$ yields 
$$M(t)=m*U(t)=\sum\nolimits_{j=0}^{t}m(t-j)U(j),$$
and applying the key renewal theorem, we conclude
 \begin{equation}\label{gM}
\rE_1(X(t))\to m_\chi,\quad t\to\infty,\quad m_\chi:=a^{-1}\sum\nolimits_{t=0}^\infty \rE(\chi(t)).
\end{equation}
The obtained parameter $m_\chi$ can be viewed as the average $\chi$-score for the population with overlapping generations. 
The next result  goes further than \eqref{gM} by giving a useful asymptotical relation in  the case $m_\chi=\infty$.

\begin{proposition}\label{rt}
Consider a critical GWO-process with $a<\infty$. If for some slowly varying at infinity function $\mathcal{L}(\cdot)$,
 \begin{equation}\label{gammo}
\sum\nolimits_{j=0}^t \rE(\chi(j))= t^\gamma\mathcal{L}(t),\ t\to\infty,\ 0\le\gamma<\infty,
\end{equation}
then 
 $\rE_1(X(t))\sim a^{-1}t^\gamma\mathcal{L}(t)$ as $t\to\infty$.
\end{proposition}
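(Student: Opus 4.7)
The plan is to leverage the convolution identity $\rE_1(X(t))=m*U(t)=\sum_{k=0}^{t}m(k)U(t-k)$ derived just above the statement, together with two facts about the renewal function $U$: the elementary renewal theorem \eqref{ert} gives $U(t)\to a^{-1}$, and in discrete time $U(t)\le 1$ since the inter-arrival distribution $A$ is supported on $\{1,2,\ldots\}$, so at most one renewal can occur at any given integer. These ingredients reduce the proposition to a direct splitting argument for a convolution of a regularly varying partial-sum sequence against a bounded sequence that converges to a limit.

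Set $S(t):=\sum_{j=0}^{t}m(j)=t^{\gamma}\mathcal L(t)$, which by \eqref{gammo} is regularly varying of index $\gamma\ge 0$. Fix $\varepsilon>0$ and, using \eqref{ert}, choose $T=T(\varepsilon)$ so that $|U(j)-a^{-1}|<\varepsilon$ for all $j\ge T$. Split
\[
\rE_1(X(t))=\sum_{k=0}^{t-T}m(k)U(t-k)+\sum_{k=t-T+1}^{t}m(k)U(t-k).
\]
In the first sum, I would write $U(t-k)=a^{-1}+[U(t-k)-a^{-1}]$; since $m(k)\ge 0$ and $t-k\ge T$ throughout this range, this decomposes into the main term $a^{-1}S(t-T)$ plus a remainder of modulus at most $\varepsilon\,S(t-T)$. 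The tail is controlled by $\sum_{k=t-T+1}^{t}m(k)U(t-k)\le S(t)-S(t-T)$ using $U\le 1$.

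The last ingredient is the standard fact that a regularly varying function satisfies $S(t-T)/S(t)\to 1$ as $t\to\infty$ for every fixed $T$ (valid uniformly across $\gamma\ge 0$, the slowly varying case $\gamma=0$ included), so $S(t)-S(t-T)=o(S(t))$. Dividing the decomposition by $S(t)=t^{\gamma}\mathcal L(t)$ and taking $\limsup$ yields $\bigl|\rE_1(X(t))/S(t)-a^{-1}\bigr|\le \varepsilon$; letting $\varepsilon\downarrow 0$ delivers $\rE_1(X(t))\sim a^{-1}t^\gamma\mathcal L(t)$. There is no substantive obstacle; the only boundary case worth noting is $\gamma=0$ with $\mathcal L$ tending to a finite limit $L$, where the statement reduces to $\rE_1(X(t))\to L/a=m_\chi$ already recorded in \eqref{gM}, and the argument above handles it uniformly with the other cases.
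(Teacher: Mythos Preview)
Your proof is correct and is essentially the same as the paper's: both write $M(t)=\sum_j m(t-j)U(j)$ (equivalently $\sum_k m(k)U(t-k)$), split the sum according to whether the argument of $U$ has reached the regime $|U-a^{-1}|<\varepsilon$, and then use the regular-variation fact $S(t)-S(t-T)=o(S(t))$ to dispose of the remaining piece. The only cosmetic difference is that you bound the short-range tail via $U\le 1$, whereas the paper bounds it via a generic constant $C\ge\sup_j|U(j)-a^{-1}|$.
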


\begin{proof} 
We have to show that \eqref{gammo} implies
$M(t)- a^{-1} M_t=o(M_t)$ as $t\to\infty,$
where $M_t:=\sum_{j=0}^t m(j)$.
To this end, observe that the difference
\begin{eqnarray*}
 M(t)-a^{-1}\sum_{j=0}^t m(t-j)=\sum_{j=0}^t m(t-j)(U(j)-a^{-1})
 \end{eqnarray*} 
 is estimated from above by
\begin{align*}
 \sum_{j=0}^t m(t-j)\,|\,U(j)-a^{-1}\,|\,&\le C\sum_{j=0}^{t_\epsilon-1}m(t-j)+\epsilon\sum_{j=t_\epsilon}^{t}m(t-j)\le  C(M_t-M_{t-t_\epsilon})+\epsilon M_t,\quad t\ge t_\epsilon,
  \end{align*} 
for an arbitrarily small $\epsilon>0$ and some finite constants $C$, $t_\epsilon$. It remains to apply the property of the regularly varying function $M_t$, saying that $M_t-M_{t-c}=o(M_t)$ as $t\to\infty$ for any fixed $c\ge0$.

   \end{proof}

Turning to $X(t)=Z(t)$, the number of individuals alive at time $t$, observe that with $\chi(t)=1_{\{0\le t< L\}}$, 
$$\sum\nolimits_{t\ge0}\rE(\chi(t))=\sum\nolimits_{t\ge0}\rP(L>t)=\mu.$$ 
Therefore, given $\rE(N)=1$,
$$\rE_1(Z(t))\to \mu a^{-1},\quad t\to\infty.$$
In this case, the parameter $m_\chi=\mu a^{-1}$ can be treated as the degree of generation overlap. For example, $m_\chi=2$ means that on average, the life length $L$ covers two generation lengths.

\section{Branching renewal equations}\label{sec:bre}

A useful extension of Definition \ref{def1} broadens the range of individual scores by replacing \eqref{X} with 
 \begin{equation}
 X(t):=\sum_{j=0}^\infty  \sum_{k=1}^{Z_j}\chi_{jk}(t-j),\quad t\in\mathbb Z.
\label{X+}
 \end{equation}
Relation \eqref{X+} takes into account even those individuals who are born after time $t$, allowing $\chi(t)>0$ for $t<0$. %Definition \ref{def1} based on relation \eqref{X} represents the special case when $X(t)\equiv0$ for $t<0$.
In this paper, we refer to this extension only to deal with the finite dimensional distributions of the population counts defined by \eqref{X}, see Lemma \ref{lel} below. 
%To see this, observe that given \eqref{X}, the time shifted count $X(s+t)$ can be treated as $X^{[\psi]}(t)$ with $\psi(t)=\chi(s+t)$ for $t\ge-s$.
\begin{definition}  \label{lala}
 For the population count $X(t)= X^{[\chi]}(t)$ given by \eqref{X+}, define a log-Laplace transform $\Lambda(t)=\Lambda^{[\chi]}(t)$ via
\begin{equation*}
e^{-\Lambda(t)}:=\rE_1(e^{-X(t)}),\quad t\in\mathbb Z.
\end{equation*}

\end{definition}
The purpose of this section is to introduce a branching renewal equation for $\Lambda(\cdot)$ and establish Proposition \ref{cth}, which will play a key role in the proofs of the main results of this paper.

\begin{lemma}\label{lel}
For a given vector  $(t_1,\ldots,t_p)$ with non-negative integer components, consider the log-Laplace transform
 \begin{equation*}
\Lambda(t)=-\ln\rE_1\Big(\exp\Big\{-\sum_{i=1}^p\lambda_iX(t_i+ t)\Big\}\Big)
\end{equation*}
 of the $p$-dimensional distribution of the population sum $X(\cdot)$ defined by \eqref{X}. 
Then, in accordance with Definition \ref{lala},
 $$\Lambda(t)=\Lambda^{[\psi]}(t),\qquad \psi(t):=\sum_{i=1}^p\lambda_i\chi(t_i+t),\quad t\in\mathbb Z.$$
\end{lemma}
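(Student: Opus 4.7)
The plan is to expand the exponent inside $\rE_1$ using the defining formula \eqref{X} for $X(\cdot)$, swap summation orders, and recognize the result as a population count of the extended type \eqref{X+} driven by the composite score $\psi$.

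First I would write, using \eqref{X} together with the fact that $\chi_{jk}(s)=0$ for $s<0$ (which allows one to formally extend the outer index $j$ to all non-negative integers),
\[
\sum_{i=1}^p \lambda_i X(t_i+t) \;=\; \sum_{i=1}^p \lambda_i \sum_{j=0}^{\infty}\sum_{k=1}^{Z_j}\chi_{jk}(t_i+t-j).
\]
Interchanging the three sums and setting $\psi_{jk}(s):=\sum_{i=1}^p \lambda_i\chi_{jk}(t_i+s)$, this equals $\sum_{j=0}^{\infty}\sum_{k=1}^{Z_j}\psi_{jk}(t-j)$.

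Next I would verify that $(\psi_{jk}(s))_{s\in\mathbb Z}$ are independent copies of $(\psi(s))_{s\in\mathbb Z}$, which follows immediately from the corresponding independence assumption on the $(\chi_{jk})$ in Definition \ref{def1}, since $\psi_{jk}$ is a deterministic function of $\chi_{jk}$ alone. The expression above therefore coincides with the population count $X^{[\psi]}(t)$ in the sense of Definition \ref{lala}, applied to the individual score $\psi$.

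The one point that deserves attention (and is the reason Definition \ref{lala} is phrased via the extended summation \eqref{X+} rather than \eqref{X}) is that although $\chi(s)=0$ for $s<0$, the composite score $\psi(s)=\sum_i\lambda_i\chi(t_i+s)$ can be strictly positive for $s\in\{-\max_i t_i,\ldots,-1\}$, so that contributions from individuals born after time $t$ must be included. This is exactly what \eqref{X+} accommodates. Taking $\rE_1$ of the exponential on both sides and applying the logarithm then yields
\[
-\ln\rE_1\Big(\exp\Big\{-\sum_{i=1}^p\lambda_iX(t_i+t)\Big\}\Big) \;=\; -\ln\rE_1\bigl(e^{-X^{[\psi]}(t)}\bigr) \;=\; \Lambda^{[\psi]}(t),
\]
which is the claim. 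The main obstacle is thus essentially bookkeeping: keeping track of the shift $t_i+t$ and recognising that the resulting score requires the extended definition \eqref{X+}; beyond that, the proof is a direct rearrangement of sums.
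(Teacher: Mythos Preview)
Your proof is correct and follows essentially the same route as the paper: expand $\sum_i\lambda_iX(t_i+t)$ via \eqref{X}, extend the outer sum over $j$ to all $j\ge0$ using $\chi_{jk}(s)=0$ for $s<0$, swap sums to obtain $\sum_{j\ge0}\sum_k\psi_{jk}(t-j)$, and identify this as $X^{[\psi]}(t)$ in the sense of \eqref{X+}. Your explicit remark that $\psi$ may be positive at negative arguments, necessitating the extended formulation \eqref{X+}, is exactly the point the paper makes just before stating Definition~\ref{lala}.
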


\begin{proof}
It suffices to observe that  
 \[\sum_{i=1}^p\lambda_iX(t_i+ t)\stackrel{\eqref{X}}{=}\sum_{i=1}^p\sum_{j=0}^t  \sum_{k=1}^{Z_j}\lambda_i\chi_{jk}(t_i+t-j)=\sum_{j=0}^\infty  \sum_{k=1}^{Z_j}\psi_{jk}(t-j)\stackrel{\eqref{X+}}{=}X^{[\psi]}(t).
\]
%Part (b) is obtained from the split $\psi_p(t)=\lambda_p\chi(t_p+t)+\sum_{i=1}^{p-1}\lambda_i\chi(t_i+t)$ using \eqref{e1}.
\end{proof}

\subsection{Derivation of the branching renewal equation}

Here we show that Definition \ref{lala} leads to what we call a branching renewal equation:
\begin{equation}
  \label{NRE}
 \Lambda(t)=B(t)-\Psi[\Lambda]*U(t),\quad t\ge0,
\end{equation}
where the operator
\begin{equation}
  \label{Psi}
 \Psi[f](t):=\rE\Big(\prod_{j=1}^L e^{-\nu(j)f(t-j)}\Big)-\sum_{j=1}^\infty e^{-f(t-j)}A(j),\quad t\ge0
\end{equation}
is defined on the set of  non-negative sequences $(f(t))_{t\in\mathbb Z}$, see more on it in Section \ref{sec:psi}. The convolution term $\Psi[\Lambda]*U(t)$ represents the non-linear part of the branching renewal equation. A seemingly free term $B(\cdot)$ of the equation \eqref{NRE} is a non-negative function specified below by \eqref{dy} and \eqref{by}. It also depends on the function $\Lambda(\cdot)$ in a non-linear way, however, asymptotically it acts as a truly free term.

The derivation of  \eqref{NRE} is based on the following extended version of decomposition \eqref{basdec} 
\[
    X(t)=\chi(t)+\sum\nolimits_{j=1}^{N}X^{(j)}(t-\tau_j),\quad t\in\mathbb Z,
\]
where $X^{(j)}(\cdot)$ are independent daughter copies of $(X(\cdot)|Z_0=1)$.
It entails
 $e^{\chi(t)-X(t)}=\prod_{j=1}^{N}e^{-X^{(j)}(t-\tau_j)}$,
and taking expectations, we obtain
\[    \rE_1(e^{\chi(t)-X(t)})=\rE(e^{-\sum_{j=1}^{N}\Lambda(t-\tau_j)})=\rE(e^{-\sum_{j=1}^L \nu(j)\Lambda(t-j)}).\]
On the other hand (recall $e_1^x:=1-e^{-x}$),
\begin{align*}
 \rE_1(e^{\chi(t)-X(t)})- e^{-\Lambda(t)}&=\rE_1(e^{\chi(t)-X(t)}-e^{-X(t)})=\rE_1\Big(e_1^{\chi(t)}e^{\chi(t)-X(t)}\Big).
\end{align*}
Denoting the last expectation $D(t)$, we can write
\begin{equation}
  \label{dy}
D(t)=\rE\Big(e_1^{\chi(t)}e^{-\sum_{j=1}^L\nu(j)\Lambda(t-j)}\Big),
\end{equation}
due to independence between the progenitor score $\chi(t)$ and the GWO-processes stemming from progenitor's daughters.
Combing the previous relations, we find
\[e^{-\Lambda(t)}=\rE(e^{-\sum_{j=1}^L \nu(j)\Lambda(t-j)})-D(t),\]
which after introducing a term involving operator \eqref{Psi}, brings
\[   e^{-\Lambda(t)}=\sum\nolimits_{j=1}^\infty e^{-\Lambda(t-j)}A(j)+\Psi[\Lambda](t)-D(t).\]

Subtracting both sides from 1, yields 
\[   e_1^{-\Lambda(t)}=\sum\nolimits_{j=1}^\infty e_1^{-\Lambda(t-j)}A(j)-\Psi[\Lambda](t)+D(t),\]
which can be rewritten in the form of a renewal equation
\[   e_1^{-\Lambda(t)}=e_1^{-\Lambda}*A(t)+\sum_{j=t+1}^\infty e_1^{-\Lambda(t-j)}A(j)-\Psi[\Lambda](t)+D(t).\]
Formally solving this renewal function, we get
 \begin{equation}
  \label{bre1}
 e_1^{-\Lambda(t)}=\sum_{j=1}^\infty e_1^{\Lambda(-j)}R_t(j)-\Psi[\Lambda]*U(t)+D*U(t),
\end{equation}
 where $R_t(j)$ is given by \eqref{rtj}. Here we used
\[  \sum_{k=0}^t \sum_{j=t-k+1}^\infty e_1^{-\Lambda(t-k-j)}A(j)U(k)= \sum_{k=0}^t U(k)\sum_{j=1}^\infty e_1^{-\Lambda(-j)}A(j+t-k)=\sum_{j=1}^\infty e_1^{\Lambda(-j)}R_t(j).\]
Since $e_1^{-\Lambda(t)}=\Lambda(t)-e_2^{-\Lambda(t)}$, we conclude that relation \eqref{NRE} holds with
 \begin{equation}
  \label{by}
 B(t)=e_2^{\Lambda(t)}+\sum_{j=1}^\infty e_1^{\Lambda(-j)}R_t(j)+D*U(t).
\end{equation}

\subsection{Laplace transform of the reproduction law}\label{sec:psi}
The Laplace transform of the reproduction law $\rE\left(e^{-f(\tau_1)-\ldots-f(\tau_N)}\right)$ is a positive functional defined on the set of non-negative sequences $(f(t))_{t\ge1}$. The higher than first moments of the joint distribution of $(\tau_1,\ldots,\tau_N)$ are characterised by the non-linear functional
\begin{equation}
  \label{Psif}
 \Psi(f):=\rE\Big(\prod_{j=1}^N e^{-f(\tau_j)}-\sum_{j=1}^N e^{-f(\tau_j)}\Big).
\end{equation}
This functional  is non-negative and monotone in view of the  elementary equality
\begin{equation}
  \label{elin}
\sum_{i=1}^k(a_i-b_j)-\prod_{i=1}^ka_i+\prod_{i=1}^kb_i=\sum_{i=1}^k(a_i-b_j)(1-a_1\ldots a_{j-1}b_{j+1}\ldots b_k).
\end{equation}
Earlier introduced operator  \eqref{Psi} is obtained from functional (\ref{Psif}) through the connection
$$\Psi[f](t)=\Psi(f_t),\quad f_t(j):=f(t-j)1_{\{1\le j\le t\}},$$
which is verified by
\begin{align*}
  \Psi(f_t)&\stackrel{(\ref{Psif})}{=}\rE\Big(\prod_{j=1}^N e^{-f_t(\tau_j)}-\sum_{j=1}^N e^{-f_t(\tau_j)}\Big)=\rE\Big(\prod_{k=1}^L e^{-f_t(k)\nu(k)}-\sum_{k=1}^L e^{-f_t(k)}\nu(k)\Big)\\
  &=\rE\Big(\prod_{k=1}^L e^{-f(t-k)\nu(k)}\Big)-\sum_{k=1}^\infty e^{-f(t-k)}A(k)\stackrel{(\ref{Psi})}{=}\Psi[f](t).
\end{align*}

 \begin{lemma}\label{nelli} Consider a constant function $f(t)=z$,  $t\in\mathbb Z$.  If \eqref{crit}, then
 \[ \Psi[f](t)=\Psi(z)=\rE(e^{-z N})-e^{-z},\quad t\ge0,\]
and $z^{-2}\Psi(z)\to b$ as $z\to0$.
 \end{lemma}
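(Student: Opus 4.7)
The plan is to treat the two claims separately, both being direct applications of definitions \eqref{Psi}, \eqref{Psif} combined with the criticality assumption \eqref{crit}.

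For the first identity, I would simply substitute $f \equiv z$ into \eqref{Psi}. Since, by construction in Section \ref{omg}, one has $\nu(1)+\ldots+\nu(L)=N$, the product collapses to $\prod_{j=1}^L e^{-\nu(j)z} = e^{-zN}$. For the sum, the criticality assumption gives $\sum_{j=1}^\infty A(j) = \rE(N) = 1$, so $\sum_{j=1}^\infty e^{-z} A(j) = e^{-z}$. The right-hand side is manifestly independent of $t$. To identify this with $\Psi(z)$ understood via \eqref{Psif} applied to the constant function $z$, I would insert $f(\tau_j)\equiv z$ into \eqref{Psif} to get $\rE(e^{-zN} - Ne^{-z}) = \rE(e^{-zN}) - e^{-z}\rE(N)$, and again use $\rE(N)=1$.

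For the asymptotic, the strategy is a second-order Taylor expansion controlled by dominated convergence. Using $\rE(N)=1$, I would write
\[
\Psi(z) = \rE(e^{-zN}) - e^{-z} = \rE(e^{-zN} - 1 + zN) - (e^{-z} - 1 + z) = \rE(e_2^{zN}) - e_2^{z}.
\]
The bound $e_2^x \le \tfrac12 x^2$ from \eqref{e12} gives $z^{-2} e_2^{zN} \le \tfrac12 N^2$, and this dominating function is integrable since $\rE(N^2) = \rV(N) + \rE(N)^2 = 2b+1 < \infty$. Together with the pointwise limit $z^{-2} e_2^{zN} \to \tfrac12 N^2$ as $z \downarrow 0$, dominated convergence yields $z^{-2}\rE(e_2^{zN}) \to \tfrac12(2b+1)$. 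Combined with the elementary $z^{-2} e_2^{z} \to \tfrac12$, this produces $z^{-2}\Psi(z) \to \tfrac12(2b+1) - \tfrac12 = b$.

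There is no serious obstacle: the only point requiring a moment's care is the application of dominated convergence, which is immediate once the bound in \eqref{e12} and the finiteness of $\rE(N^2)$ under \eqref{crit} are invoked.
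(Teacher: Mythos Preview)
Your proof is correct. For the first identity you argue exactly as the paper does, by direct substitution into \eqref{Psi} and \eqref{Psif} together with $\rE(N)=1$. For the asymptotic, the paper simply invokes L'Hospital's rule (applied twice to the quotient $\Psi(z)/z^2$), whereas you rewrite $\Psi(z)=\rE(e_2^{zN})-e_2^{z}$ and pass to the limit via dominated convergence using the bound $e_2^{x}\le\tfrac12 x^2$ from \eqref{e12}. The two arguments are essentially equivalent: L'Hospital requires differentiating $\rE(e^{-zN})$ twice under the expectation, which itself needs $\rE(N^2)<\infty$ and a dominated-convergence-type justification, so your route makes explicit what the paper leaves implicit.
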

 
\begin{proof}
The first  assertion follows from the relation connecting $\Psi[f](t)$ and $\Psi(f)$. The second assertion follows from the L'Hospital rule.
\end{proof}

 \begin{lemma}\label{Lpsi}   If (\ref{crit}) holds, and 
 $$nr_n(ny)\stackrel{y\,}{\Rightarrow}r(y),\quad n\to\infty,$$
 where $r:[0,\infty)\to[0,\infty)$ is a continuous function, then 
 \[n^2\Psi[r_n](ny)\stackrel{y}{\to} br^2(y),\ n\to\infty.\]
 \end{lemma}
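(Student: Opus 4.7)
My plan is to expand $\Psi[r_n](ny)=\Psi(f_{ny})$, where $f_{ny}(j):=r_n(ny-j)1_{\{1\le j\le ny\}}$, around its second-order term, show the main term converges to $br^2(y)$ uniformly on $[y_0,y_1]$, and control the cubic-type remainders under only the assumption $\rE N^2<\infty$.

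I would first fix $M$ with $r_n(t)\le M/n$ for $t\in[0,ny_1]$ (available from the uniform convergence $nr_n(ny)\stackrel{y\,}{\Rightarrow}r(y)$ on $[0,y_1]$). Setting $S:=f(\tau_1)+\ldots+f(\tau_N)$ and applying $e^{-x}=1-x+e_2^x$ to both $e^{-S}$ and each $e^{-f(\tau_i)}$ in $\Psi(f)=\rE[e^{-S}-\sum e^{-f(\tau_i)}]$, then using $\rE(N)=1$, the linear parts cancel and leave the compact identity $\Psi(f)=\rE[e_2^S-\sum_{i=1}^N e_2^{f(\tau_i)}]$. Combined with $S^2/2-\sum f(\tau_i)^2/2=\sum_{i<j}f(\tau_i)f(\tau_j)$ this decomposes
\[
\Psi(f)=\rE\Big[\sum_{i<j}f(\tau_i)f(\tau_j)\Big]-\rE[S^2/2-e_2^S]+\rE\Big[\sum_{i=1}^N\big(f(\tau_i)^2/2-e_2^{f(\tau_i)}\big)\Big],
\]
where the last two are non-negative remainders controlled via $0\le x^2/2-e_2^x\le x^2\min(x,1)$.

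For the main term I would analyse $n^2\rE[\sum_{i<j}f_{ny}(\tau_i)f_{ny}(\tau_j)]$ by truncation on $\{L\le K\}$. On this event every $\tau_i\le K$ so $\tau_i/n\to 0$; the uniform convergence $nr_n(ny)\stackrel{y\,}{\Rightarrow}r(y)$ combined with uniform continuity of $r$ on $[0,y_1]$ then gives $nr_n(ny-\tau_i)\to r(y)$ uniformly in $y\in[y_0,y_1]$ and in $\tau_i\le K$. Since $n^2 f_{ny}(\tau_i)f_{ny}(\tau_j)\le M^2$ and $\rE\binom{N}{2}=(\rE N^2-1)/2=b<\infty$, a uniform dominated-convergence argument applied to $\sup_{y\in[y_0,y_1]}|\cdot|$ yields uniform convergence to $r^2(y)\rE[\binom{N}{2}1_{\{L\le K\}}]$. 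Sending $K\to\infty$ removes the cutoff via $\rE[\binom{N}{2}1_{\{L>K\}}]\to 0$, leaving the desired limit $br^2(y)$.

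The main obstacle is controlling $\rE[S^2/2-e_2^S]$ when only $\rE N^2<\infty$ is assumed: the naive bound $\rE S^3/6$ would demand a finite third moment of $N$. Instead I would exploit the sharper inequality $0\le x^2/2-e_2^x\le x^2\min(x,1)$ with the envelope $S\le NM/n$, yielding $\rE[S^2/2-e_2^S]\le(M/n)^2\rE[N^2\min(NM/n,1)]$. Since $\min(NM/n,1)\to 0$ for any finite $N$ and is dominated by $1$ against the $N^2$-integrable weight, dominated convergence makes this $o(n^{-2})$. The single-sum remainder is immediately $O(n^{-3})$ via $\rE N=1$ and $f_{ny}(\tau_i)\le M/n$. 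Combining these bounds with the main-term limit yields $n^2\Psi[r_n](ny)\stackrel{y}{\to}br^2(y)$.
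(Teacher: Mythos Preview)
Your proof is correct and takes a genuinely different route from the paper's. You expand $\Psi(f)$ to second order directly, obtaining the identity $\Psi(f)=\rE[e_2^S-\sum_i e_2^{f(\tau_i)}]$ and isolating the main term $\rE\big[\sum_{i<j}f(\tau_i)f(\tau_j)\big]$; the cubic-type remainder is handled by the key step $n^2\rE[S^2/2-e_2^S]\le M^2\rE[N^2\min(NM/n,1)]\to 0$ via dominated convergence against $N^2$, which neatly sidesteps any third-moment assumption on $N$. The paper instead establishes a Lipschitz-type comparison estimate
\[
|\Psi[f](t)-\Psi[g](t)|\le 2b\,\|f\vee g\|\max_{1\le j\le t_1}|f(t-j)-g(t-j)|+\|f\vee g\|^2\,\rE\big[(N-N(t_1))N\big],
\]
derived from the telescoping identity \eqref{elin}, and applies it with $g\equiv n^{-1}r(y)$ to reduce to the constant case $\Psi(z)=\rE(e^{-zN})-e^{-z}$, which is handled by Lemma~\ref{nelli} and L'Hospital. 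Your argument is more self-contained for this lemma; the paper's detour through the comparison inequality pays dividends later, since the same estimate \eqref{dpsi} is re-used in the proof of Proposition~\ref{cth}. One minor point: your truncation on $\{L\le K\}$ relies on $L<\infty$ almost surely, which is implicit in the model but not among the stated hypotheses of the lemma; truncating instead on $\{\tau_N\le K\}$ (equivalently $\{N(K)=N\}$, which is what underlies the paper's $\delta(t_1)=\rE[(N-N(t_1))N]$) would use only $\rE N^2<\infty$.
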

\begin{proof}  Observe, that (\ref{elin}) implies 
 \begin{equation*}
%   \label{camilla}
\Psi[f](t)-\Psi[g](t)=\rE\Big(\sum_{j=1}^{N} \Big(e^{-g(t-\tau_j)}-e^{-f(t-\tau_j)}\Big)\Big(1-\prod_{i=1}^{j-1} e^{-f(t-\tau_i)}\prod_{i=j+1}^{N}  e^{-g(t-\tau_i)}\Big)\Big),
\end{equation*}
which in turn gives for arbitrary $1\le t_1\le t$,
\begin{align*}
\,|\,\Psi[f](t)-\Psi[g](t)\,|\, &\le \rE\Big(\sum_{j=1}^{N(t_1)} |f(t-\tau_j)-g(t-\tau_j)| I_j+\|f\vee g\|\sum_{j=N(t_1)+1}^{N} I_j\Big),
\end{align*}
where $\|f\|:=\sup_{t\ge1}\,|\,f(t)\,|$ and
\[I_j:= \Big(1-\prod_{i=1}^{j-1} e^{-f(t-\tau_i)}\prod_{i=j+1}^{N}  e^{-g(t-\tau_i)}\Big)\le \sum_{i=1}^{j-1} f(t-\tau_i)+\sum_{i=j+1}^{N}  g(t-\tau_i)\le \|f\vee g\|(N-1).\]
Using $\rE(N(N-1))=2b$, we therefore obtain
\begin{align*}
\,|\,\Psi[f](t)-\Psi[g](t)\,|\,&\le  2b\|f\vee g\|\,\max_{1\le j\le t_1}\,|\,f(t-j)-g(t-j)\,|\, +\|f\vee g\|^2 \rE((N(t)-N(t_1))N).
\end{align*}
This implies that 
\begin{equation}
  \label{dpsi}
 \,|\,\Psi[f](t)-\Psi[g](t)\,|\,\le2b\|f\vee g\|\max_{1\le j\le t_1}\,|\,f(t-j)-g(t-j)\,|\,+\|f\vee g\|^2\delta(t_1),
\end{equation}
where 
$\delta(t):=\rE((N-N(t))N)\to0$ as $t\to\infty.$
% In particular, with $t=t_1$ we get
%\begin{equation}
%  \label{dpsi=}
% \,|\,\Psi(f)(t)-\Psi(g)(t)\,|\,\le2b\|f\vee g\|\max_{1\le j\le t}\,|\,f(j)-g(j)\,|\,+\|f\vee g\|^2\delta(t).
%\end{equation}

Applying (\ref{dpsi}) with $ t_1=n\epsilon$, $t=ny$, and
$$f(j):=r_n(j),\quad g(j):=z_n,\quad j\ge 1,\quad z_n:=n^{-1}r(y),$$ 
%that is assuming the function $g(\cdot)$ being a constant $z_n$, 
we get
 \[\,|\, n^2\Psi[r_n](ny)-n^2\Psi[z_n](ny)\,|\,\le C\sup_{0\le x\le \epsilon}\,|\,nr_n(n(y-x))-r(y)\,|\,+C_1\delta(n\epsilon).\]
 Thus, under the  imposed conditions, 
 \[\lim_{\epsilon\to0}\sup_{0\le y\le y_0}(n^2\Psi[r_n](ny)-n^2\Psi[z_n](ny))\to0, \quad n\to\infty,\]
 for any  $y_0>0$. 
 It remains to observe that 
$n^2\Psi[z_n](ny)\stackrel{y}{\to} br^2(y)$ as $n\to\infty$, according to Lemma \ref{nelli}.
\end{proof}

\subsection{Basic convergence result}

If $\Lambda(t)$ is given by Definition \ref{lala}, then
\begin{equation}
  \label{layla}
\rE_n(e^{-X(t)})=e^{-n\Lambda(t)},
\end{equation}
This observation explains the importance of the next result.

\begin{proposition} \label{cth}
Assume (\ref{crit}), $a<\infty$, and consider a sequence of positive functions $\Lambda_n(\cdot)$ satisfying  
\begin{equation}
\label{sNRE}
\Lambda_n(t)=B_n(t)-\Psi(\Lambda_n)*U(t),\quad t\ge0,\quad n\ge1.
\end{equation}
If the non-negative functions $B_n(t)$ are such that
\begin{equation}\label{btn}
    nB_n(ny)\stackrel{y}{\to}B(y),\quad n\to\infty,
\end{equation}
where $B(y)$ is a  continuous function, then 
$$n\Lambda_n(ny)\stackrel{y}{\to}r(y),\quad n\to\infty,$$
where $r(y)$ is a continuous function uniquely defined by 
\be\label{rc}
r(y)=B(y)-ba^{-1}\int_0^yr^2(u)du.
\ee
\end{proposition}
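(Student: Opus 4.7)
The plan is to pass to the limit in a rescaled version of \eqref{sNRE}. Set $r_n(y):=n\Lambda_n(ny)$; the aim is to show $r_n(y)\stackrel{y}{\to}r(y)$ with $r$ solving \eqref{rc}. First I would establish an a priori bound: since the functional $\Psi(\cdot)$ takes non-negative values (visible from \eqref{elin}) and $U\ge 0$, the equation \eqref{sNRE} gives $0\le\Lambda_n(t)\le B_n(t)$, hence $r_n(y)\le nB_n(ny)$. Coupled with \eqref{btn} and continuity of $B$, this makes $\{r_n\}$ uniformly bounded on every compact subinterval of $(0,\infty)$.

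Next I would rescale the convolution in \eqref{sNRE}. After the substitution $k=ny-j$,
$$n\,\Psi(\Lambda_n)*U(ny)=n\sum_{k=0}^{ny}\Psi(\Lambda_n)(k)\,U(ny-k),$$
which is a Riemann-sum approximation to $\int_0^y n^2\Psi(\Lambda_n)(nx)\,U(n(y-x))\,dx$. Granting subsequential locally uniform convergence $r_n\to r$, Lemma \ref{Lpsi} would yield $n^2\Psi(\Lambda_n)(nx)\to br^2(x)$, while the elementary renewal theorem \eqref{ert} gives $U(n(y-x))\to 1/a$ for $x<y$. The a priori bound permits dominated convergence, so any subsequential limit $r$ satisfies the integral equation \eqref{rc} and is continuous.

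To complete the argument I would extract a subsequential uniform limit on compacts via Arzela--Ascoli (equicontinuity being read off from the rescaled integral form of the equation), and then invoke uniqueness for \eqref{rc}: for two continuous solutions $r_1,r_2$ bounded by $M$ on $[0,y_1]$,
$$|r_1(y)-r_2(y)|\le(b/a)\int_0^y|r_1^2(u)-r_2^2(u)|\,du\le(2bM/a)\int_0^y|r_1(u)-r_2(u)|\,du,$$
and Gronwall's inequality forces $r_1\equiv r_2$. All subsequential limits then coincide with the unique solution of \eqref{rc}, yielding $r_n\to r$ along the full sequence.

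The main obstacle I anticipate is upgrading the weaker uniform convergence assumed in \eqref{btn} (uniform only on intervals bounded away from $0$) to the stronger locally-uniform form whose hypothesis appears in Lemma \ref{Lpsi}: behaviour near $y=0$ requires separate treatment, since the integrand $n^2\Psi(\Lambda_n)(nx)U(n(y-x))$ degenerates as both $x\to 0$ and $x\to y$. Handling the boundary would likely call for Lemma \ref{rty} to bound the convolution contribution from small arguments, while iterating the inequality $\Lambda_n\le B_n$ once through the convolution should provide the auxiliary tail estimate needed to secure equicontinuity of $\{r_n\}$ and justify the dominated convergence in the second step.
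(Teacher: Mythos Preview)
Your a priori bound $0\le n\Lambda_n(ny)\le nB_n(ny)$ and your Gronwall uniqueness argument for \eqref{rc} are both correct and match the paper. The divergence is in the middle step: the paper does \emph{not} use Arzel\`a--Ascoli or any subsequential compactness. Instead it applies Lemma~\ref{Lpsi} to the \emph{known} target function $r$, not to $\Lambda_n$. Writing $r_n(t):=r(t/n)$ (so that $nr_n(ny)\stackrel{y}{\Rightarrow}r(y)$ trivially, because $r$ is continuous), one verifies that $r$ itself satisfies a discretised version of \eqref{sNRE},
\[
r(y)=nB_n(ny)-n\sum_{t=0}^{ny}\Psi[n^{-1}r_n](ny-t)\,U(t)+\delta_n(y),\qquad \delta_n(y)\stackrel{y}{\to}0.
\]
Subtracting this from \eqref{sNRE} and invoking the Lipschitz-type bound \eqref{dpsi} on $|\Psi[\Lambda_n](t)-\Psi[n^{-1}r_n](t)|$ gives, for $\Delta_n(y):=n\Lambda_n(ny)-r(y)$,
\[
|\Delta_n(y)|\le c^*y\sup_{\epsilon(1-\epsilon)\le u\le y}|\Delta_n(u)|+C\epsilon+\delta_n(y),
\]
which is self-absorbing on $(0,y^*]$ with $y^*:=(2c^*)^{-1}$; the convergence interval is then extended from $(0,ky^*]$ to $(0,(k+1)y^*]$ by induction on $k$.

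The gap you yourself flag is genuine and is not closed by your proposed fixes. To run Arzel\`a--Ascoli you need equicontinuity of $y\mapsto n\Lambda_n(ny)$, which would require a uniform bound on $n^2\Psi[\Lambda_n](nx)$ for $x$ all the way down to $0$; but the hypothesis \eqref{btn} gives no control of $nB_n(ny)$ near $y=0$, so neither the bound $\Lambda_n\le B_n$ nor one further iteration of it supplies this. Moreover, Lemma~\ref{rty} handles sums against the residual-time weights $R_{ny}(\cdot)$, not against $U(\cdot)$, so it does not tame the boundary contribution in the convolution $\Psi(\Lambda_n)*U$. The paper's device---comparing $\Lambda_n$ directly to the discretisation of the \emph{target} $r$ via \eqref{dpsi}, rather than trying to pass to the limit inside $\Psi[\Lambda_n]$---is precisely what breaks the circularity you identify, and it makes both the compactness step and the strong-convergence hypothesis of Lemma~\ref{Lpsi} for $\Lambda_n$ unnecessary.
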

\begin{proof}. We will prove this statement  in three steps. Firstly, we will show
\be\label{bro}
r(y)=nB_n(ny)-n\sum_{t=0}^{ny}\Psi[n^{-1}r_n](ny-t)U(t)+\delta_n(y),
\ee
where $\delta_n(y)$ stands for a function (different in different formulas) such that $\delta_n(y)\stackrel{y}{\to}0$ as $n\to\infty$.  
Secondly, putting
$\Delta_n(y):=n\Lambda_n(ny)-r(y),$
we will find a $y^*>0$ such that
\be\label{nasos}
\sup_{y_0\le u\le y_1}\,|\,\Delta_n(u)\,|\,\to0,\ n\to\infty,\ 0<y_0\le y_1\le y^*.
\ee
Thirdly, we will demonstrate that
\be\label{naos}
\Delta_n(y)\stackrel{y}{\to}0,\quad n\to\infty.
\ee

Proof of (\ref{bro}).  Rewriting (\ref{rc}) as
$r(y)=B(y)-b\int_0^yr^2(y-u)a^{-1}du$,
and using (\ref{ert}),  (\ref{btn}), we obtain 
\[r(y)=nB_n(ny)-bn^{-1}\sum_{t=0}^{ny}r^2(y-tn^{-1})U(t)+\delta_n(y).\]
This and Lemma \ref{Lpsi} imply (\ref{bro}). 

Proof of (\ref{nasos}). Relations  (\ref{sNRE})  and (\ref{bro}) yield
\begin{equation}\label{Del}
    \Delta_n(y)=n\sum_{t=0}^{ny}\Big(\Psi[\Lambda_n](t)-\Psi[n^{-1}r_n](t)\Big)U(ny-t)+\delta_n(y).
\end{equation}
Under the current assumptions, the inequality $n\Lambda_n(ny)\le nB_n(ny)$ implies that the sequence of functions $n\Lambda_n(ny)$ is uniformly bounded over any  finite interval $0\le y\le y_1$. Therefore, putting  $t_1:=t\epsilon$ into (\ref{dpsi}) gives
\[n^2\,|\,\Psi[\Lambda_n](t)-\Psi[n^{-1}r_n](t)\,|\,\le C_1\sup_{(1-\epsilon)t\le j\le t}\,|\,\Delta_n(jn^{-1})\,|\,+C_2\delta(t\epsilon),\]
for any fixed $0<\epsilon<1$.
Combining this with (\ref{Del}), entails
\begin{align}\label{victory}
 \,|\,\Delta_n(y)\,|\,&\le Cn^{-1}\sum_{t=n\epsilon}^{ny}U(ny-t)\sup_{(1-\epsilon)t\le j\le t}\,|\,\Delta_n(jn^{-1})\,|\,+C_1n^{-1}\sum_{t=0}^{n\epsilon}U(ny-t)+\delta_n(y),
 \end{align}
 so that for some positive constant $c^*$ independent of $(n,\epsilon,y)$,
\begin{align}\label{deL}
 \,|\,\Delta_n(y)\,|\,\le c^* y\sup_{\epsilon(1-\epsilon)\le u\le y}\,|\,\Delta_n(u)\,|\,+C\epsilon+\delta_n(y).
\end{align}
It follows, 
\[\sup_{\epsilon(1-\epsilon)\le y\le v}\,|\,\Delta_n(y)\,|\,\le c^*v\sup_{\epsilon(1-\epsilon)\le u\le v}\,|\,\Delta_n(u)\,|\,+C\epsilon+\sup_{\epsilon(1-\epsilon)\le y\le v}\delta_n(y). \]
%so that
%\[\sup_{\epsilon(1-\epsilon)\le u\le y}\,|\,\Delta_n(u)\,|\,\le c^*y\sup_{\epsilon(1-\epsilon)\le u\le y}\,|\,\Delta_n(u)\,|\,+C\epsilon+\sup_{\epsilon(1-\epsilon)\le u\le y}\delta_n(u). \]
Replacing here  $v$ by $y^*:=(2c^*)^{-1}$, we derive
\[\limsup_{n\to\infty}\sup_{\epsilon(1-\epsilon)\le u\le y^*}\,|\,\Delta_n(u)\,|\,\le C\epsilon,\]
which, after letting $\epsilon\to0$, results in (\ref{nasos}).

Proof of (\ref{naos}). It suffices to demonstrate that the convergence interval in  (\ref{nasos}) can be consecutively expanded from $(0, y^*]$ to $(0,2y^*]$, from $(0,2y^*]$ to $(0,3y^*]$, and so forth. Suppose we have established, that for some $k\ge1$,
\begin{align*}\label{veliky}
\sup_{y_0\le u\le y_1}\,|\,\Delta_n(u)\,|\,\to0,\ n\to\infty,\ 0<y_0\le y_1\le ky^*.
\end{align*}
Then  for $ky^*<y\le(k+1)y^*$, by (\ref{victory}),
\begin{align*}
 \,|\,\Delta_n(y)\,|\,&\le Cn^{-1}\sum_{t=nky^*}^{ny}U(ny-t)\sup_{(1-\epsilon)t\le j\le t}\,|\,\Delta_n(jn^{-1})\,|\,+C\epsilon+\delta_n(y),
 \end{align*}
yielding
\begin{eqnarray*}%
 \sup_{ky^*\le y\le (k+1)y^*}\,|\,\Delta_n(y)\,|\,%&\le& c^*(y-ky^*)^\alpha\sup_{\epsilon(1-\epsilon)\le u\le y}\,|\,\Delta_n(u)\,|\,+C_1\epsilon+\delta_n(y)\\
 \le c^*y^*\sup_{ky^*\le u\le (k+1)y^*}\,|\,\Delta_n(u)\,|\,+C\epsilon+\sup_{ky^*\le u\le (k+1)y^*}\delta_n(y).
\end{eqnarray*}
Since $c^*y^*<1$, we may conclude that 
$$\sup_{ky^*\le u\le (k+1)y^*}\,|\,\Delta_n(u)\,|\,\to0,\ n\to\infty,$$
thereby completing the proof of  (\ref{naos}).
\end{proof}

\section{Continuous state critical branching process }\label{secta}
In this section, among other things, we clarify the meaning of $\xi_\gamma(\cdot)$ given by \eqref{asmaa}, in terms of the log-Laplace transforms of the fdd's of the process $\xi(\cdot)$. From now on we  consistently use the following shortenings
\begin{align*}
 G_p(\bar u,\bar\lambda)&:=G_p(u_1,\ldots,u_p;\lambda_1,\ldots,\lambda_p),\\
 G_p(c_1\bar u+y,c_2\bar\lambda)&:=G_p(c_1u_1+y,\ldots,c_1u_p+y;c_2\lambda_1,\ldots,c_2\lambda_p),\\
 H_{p,q}(\bar u,\bar\lambda)&:=H_{p,q}(u_1,\ldots,u_p;\lambda_{11},\ldots,\lambda_{p1};\ldots;\lambda_{1q},\ldots,\lambda_{pq}).
\end{align*}

\subsection{Laplace transforms for $\xi(\cdot)$}\label{sec:cb}
The set of functions 
 \begin{equation} \label{gula}
G_p(\bar u,\bar\lambda):=- \ln\rE_1\Big(e^{-\lambda_1\xi(u_1)-\ldots-\lambda_p\xi(u_p)}\Big),\quad p\ge1,
\end{equation}
with $u_i,\lambda_i\ge0$, determines the fdd's for the process $\xi(\cdot)$.

\begin{lemma}\label{lla}
For non-negative $x,y, u_1, u_2,\ldots,\lambda_1,\lambda_2,\ldots$,
\[\rE\Big(e^{-\lambda_1\xi(u_1+y)-\ldots-\lambda_{p}\xi(u_{p}+y)}\,|\,\xi(y)=x\Big)=e^{-xG_{p}(\bar u,\bar\lambda)}.\]
\end{lemma}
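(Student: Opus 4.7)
The plan is to prove this by induction on $p$, combining the Markov property of the process $\xi(\cdot)$ with the one-step transition law (\ref{cbp}) as the base case. The key structural feature driving the whole argument is that (\ref{cbp}) makes the Laplace transform \emph{linear in $x$ inside the exponent}, and this linearity then propagates to all finite-dimensional marginals through the tower property.

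For the base case $p=1$, the claimed identity reduces directly to (\ref{cbp}): its left-hand side equals $\exp(-\lambda_1 x / (1 + \lambda_1 b u_1))$, and setting $x = 1$, $y = 0$ in the definition (\ref{gula}) gives $G_1(u_1,\lambda_1) = \lambda_1/(1 + \lambda_1 b u_1)$, so the two sides agree.

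For the inductive step, order the time points so that $u_1 \le u_2 \le \ldots \le u_p$, which is harmless since the right-hand side is symmetric under simultaneous permutation of the pairs $(u_i,\lambda_i)$. Condition on $\mathcal{F}_{y+u_1}$ and apply the Markov property at time $y + u_1$:
\[
\rE\Big(e^{-\sum_{i=2}^{p}\lambda_i \xi(u_i+y)}\,\Big|\,\mathcal{F}_{y+u_1}\Big) = h\bigl(\xi(y+u_1)\bigr),
\]
where $h(z) := \rE\bigl(e^{-\sum_{i=2}^{p}\lambda_i \xi((u_i-u_1)+(y+u_1))}\,\big|\,\xi(y+u_1)=z\bigr)$. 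By the induction hypothesis applied to the $(p-1)$-dimensional vector of time-shifts $u_2-u_1,\ldots,u_p-u_1$ measured from the new base time $y+u_1$, we obtain
\[
h(z) = \exp\bigl(-z\,G_{p-1}(u_2-u_1,\ldots,u_p-u_1;\lambda_2,\ldots,\lambda_p)\bigr).
\]
Substituting this into the tower expansion and absorbing $h(\xi(y+u_1))$ into the exponent alongside $e^{-\lambda_1 \xi(u_1+y)}$, the problem collapses to a \emph{one-dimensional} Laplace transform of $\xi(u_1+y)$ at effective parameter $\lambda_1 + G_{p-1}(\ldots)$. Invoking the base case then yields
\[
\rE\Big(e^{-\sum_{i=1}^{p}\lambda_i \xi(u_i+y)}\,\Big|\,\xi(y)=x\Big)
= \exp\Big(-x\,G_1\bigl(u_1,\lambda_1+G_{p-1}(u_2-u_1,\ldots;\lambda_2,\ldots,\lambda_p)\bigr)\Big).
\]
This is of the claimed exponential-linear-in-$x$ form; evaluating at $x=1,y=0$ identifies the bracketed expression with $G_p(\bar u,\bar\lambda)$ via (\ref{gula}), closing the induction and also producing the recursion $G_p(\bar u,\bar\lambda)=G_1\bigl(u_1,\lambda_1+G_{p-1}(u_2-u_1,\ldots;\lambda_2,\ldots,\lambda_p)\bigr)$ as a byproduct.

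There is no real analytic obstacle: the entire content of the lemma is the branching/Markov structure of $\xi(\cdot)$, and the only thing to watch is notational bookkeeping when reindexing the $u_i-u_1$ shifts and matching them with the definition (\ref{gula}) under the shorthand $\rE_x(\cdot)=\rE(\cdot\,|\,\xi(0)=x)$.
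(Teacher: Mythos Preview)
Your proof is correct and follows essentially the same approach as the paper: induction on $p$ using the Markov property of $\xi(\cdot)$ together with the one-step Laplace transform \eqref{cbp}. The paper illustrates only the case $p=2$ explicitly and adopts the opposite ordering convention $u_1>\ldots>u_p$ (so the smallest time carries index $p$ rather than $1$), but the mechanism---peel off the smallest time by conditioning there, collapse the remaining $p-1$ terms via the inductive hypothesis, then apply \eqref{cbp} once more---is identical to yours.
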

\begin{proof} This result is obtained by induction, using \eqref{cbp} and the Markov property of $\xi(\cdot)$. To illustrate the argument, take $p=2$ and non-negative $y, y_1, y_2$. We have
\begin{align*}
 \rE\Big(e^{-\lambda_1\xi(y+y_1+y_2)-\lambda_{2}\xi(y+y_{2})}\,|\,\xi(y)=x\Big)
& =\rE\Big(e^{-\lambda_{2}\xi(y+y_{2})}\rE\Big(e^{-\lambda_1\xi(y+y_1+y_2)}\,|\,\xi(y+y_2)\Big)\,|\,\xi(y)=x\Big)\\
%& =\rE\Big(e^{-\lambda_{2}\xi(u_{2})}\exp\Big\{-\frac{\lambda_1\xi(u_{2})}{1+b\lambda_1(u_1-u_2)}\Big\}\,|\,\xi(y)=x\Big)\\
& \stackrel{(\ref{cbp})}{=}\rE\Big(\exp\Big\{-(\lambda_{2}+\tfrac{\lambda_1}{1+b\lambda_1 y_1})\xi(y+y_{2})\Big\}\,|\,\xi(y)=x\Big)\\
&\stackrel{(\ref{cbp})}{=}\exp\Big\{-\frac{(\lambda_1+\lambda_{2}+b\lambda_1\lambda_{2}y_1)x}{1+b\lambda_1(y_1+y_2)+b\lambda_{2}y_2+b^2\lambda_1\lambda_2y_1y_2}\Big\}.
\end{align*}
With $u_2=y_2$ and $u_1=y_1+y_2$, this gives an explicit expression
\[G_2(\bar u,\bar\lambda)=\frac{\lambda_1+\lambda_{2}+b\lambda_1\lambda_{2}(u_1-u_2)}{1+b\lambda_1u_1+b\lambda_{2}u_2+b^2\lambda_1\lambda_2(u_1-u_2)u_2},\]
for the asserted relation
$\rE\Big(e^{-\lambda_1\xi(u_1+y)-\lambda_{2}\xi(u_{2}+y)}\,|\,\xi(y)=x\Big)=e^{-xG_{2}(\bar u,\bar\lambda)}$ in the case $p=2$.
 \end{proof}

\begin{lemma}\label{illa}
If
\begin{equation} \label{ula}
u_1>\ldots>u_p=0,\quad \lambda_1\ge0,\ldots,\lambda_p\ge0,
\end{equation}
then for all $y\ge0,$ assuming $G_0(\bar u,\bar\lambda):=0$, the following two relations hold
\begin{align}
 G_p(\bar u+y,\bar\lambda) &= (by+(G_{p-1}(\bar u,\bar\lambda)+\lambda_p)^{-1})^{-1},%= \frac{G_{p-1}(\bar u,\bar\lambda)+\lambda_p}{1+by(G_{p-1}(\bar u,\bar\lambda)+\lambda_p)}
\nonumber \\
  G_p(\bar u+y,\bar\lambda)&= G_{p-1}(\bar u,\bar\lambda)+\lambda_p-b\int_0^yG_p^2(\bar u+v,\bar\lambda)dv. \label{Gp}
\end{align}
\end{lemma}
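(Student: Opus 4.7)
The plan is to derive the first (closed-form) identity directly from the Markov property of $\xi(\cdot)$ and Lemma \ref{lla}, and then get the integral identity by differentiating in $y$ and integrating back with an obvious boundary value.

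For the first identity, I would exploit the fact that $u_p = 0$ makes $\lambda_p\xi(u_p + y) = \lambda_p\xi(y)$ measurable with respect to $\xi(y)$. Conditioning on $\xi(y)$ and applying Lemma \ref{lla} to the remaining $(p-1)$ coordinates $(u_1,\ldots,u_{p-1};\lambda_1,\ldots,\lambda_{p-1})$ yields
\[
\rE_1\Bigl(e^{-\sum_{i=1}^p \lambda_i \xi(u_i+y)}\Bigr) = \rE_1\Bigl(e^{-\xi(y)\,[\lambda_p + G_{p-1}(\bar u,\bar\lambda)]}\Bigr),
\]
where $G_{p-1}(\bar u,\bar\lambda)$ is to be read as $G_{p-1}$ evaluated on the first $p-1$ coordinates of $\bar u$ and $\bar\lambda$. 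Now \eqref{cbp} (the one-dimensional marginal) with $x = 1$, $v=0$, $u = y$, and Laplace parameter $\lambda_p + G_{p-1}$ identifies the right-hand side with $\exp\{-(\lambda_p+G_{p-1})/(1+by(\lambda_p+G_{p-1}))\}$. Taking logs and rewriting $(1+by\mu)/\mu = by+\mu^{-1}$ gives the first identity.

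Setting $y=0$ in the closed form collapses it to $G_p(\bar u,\bar\lambda) = G_{p-1}(\bar u,\bar\lambda) + \lambda_p$, which is also immediate from the fact that $\xi(u_p) = \xi(0) \equiv 1$ under $\rE_1$, so that $\lambda_p\xi(u_p)$ contributes a deterministic $\lambda_p$ to the exponent in \eqref{gula}. For the integral identity, I differentiate the closed form $G_p(\bar u+y,\bar\lambda) = (by + (G_{p-1}+\lambda_p)^{-1})^{-1}$ in $y$, obtaining
\[
\frac{d}{dy}G_p(\bar u + y,\bar\lambda) = -\frac{b}{(by + (G_{p-1}+\lambda_p)^{-1})^2} = -b\,G_p^2(\bar u+y,\bar\lambda).
\]
Integrating from $0$ to $y$ and substituting the boundary value $G_p(\bar u,\bar\lambda) = G_{p-1}(\bar u,\bar\lambda) + \lambda_p$ delivers \eqref{Gp}.

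There is no real obstacle here once the convention for $G_{p-1}(\bar u,\bar\lambda)$ (interpreting the argument as the first $p-1$ coordinates) is fixed: the whole proof is bookkeeping around the one-dimensional formula from \eqref{cbp} and a single application of Lemma \ref{lla}. The only place one must stay alert is that Lemma \ref{lla} is invoked with a $(p-1)$-tuple not constrained by \eqref{ula}, which is fine since Lemma \ref{lla} does not require its last coordinate to vanish.
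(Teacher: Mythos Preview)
Your proof is correct and follows essentially the same route as the paper: condition on $\xi(y)$, apply Lemma \ref{lla} to the first $p-1$ coordinates, then use \eqref{cbp} to obtain the closed form; the paper then notes that the second identity amounts to checking that $H(y)=(by+H_0^{-1})^{-1}$ satisfies $H(y)=H_0-b\int_0^yH^2(v)\,dv$, which is exactly your differentiate-and-integrate step with the boundary value $H_0=G_{p-1}+\lambda_p$.
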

 
\begin{proof}
 With $u_p=0$, relation \eqref{gula} gives
 \begin{align*}
 G_p(\bar u+y,\bar\lambda) &= -\ln \rE_1\Big(e^{-\lambda_p\xi(y)}\rE\Big(e^{-\lambda_1\xi(u_1+y)-\ldots-\lambda_{p-1}\xi(u_{p-1}+y)}\,|\,\xi(y)\Big)\Big). 
\end{align*}
Applying Lemma \ref{lla} and \eqref{cbp}, we get the first statement
\begin{align*}
 G_p(\bar u+y,\bar\lambda) &=-\ln \rE_1\Big(e^{-\lambda_p\xi(y)}e^{-G_{p-1}(\bar u,\bar\lambda)\xi(y)}\Big)=
(by+(G_{p-1}(\bar u,\bar\lambda)+\lambda_p)^{-1})^{-1}.
\end{align*}
To arrive at the second statement, it is enough to verify that the function $H(y)= (by+H_0^{-1})^{-1}$ satisfies
\begin{equation}
  \label{ric}
H(y)=H_0-b\int_0^yH^2(v)dv.
\end{equation}
\end{proof}

\subsection{Riccati integral equations}\label{sec:cb}

Equation \eqref{Gp} has a form of the Riccati integral equation \eqref{ric}, associated with a simple Riccati differential equation
$H'(y)=-H^2(y)$,  $ H(0)=H_0$.
Our limit theorems require a  more general equation of this type
\be\label{heq}
H(y)=F(y)-b\int_0^yH^2(v)dv.
\ee

\begin{lemma}\label{love}
Let function $F:[0,\infty)\to[0,\infty)$ be non-decreasing, with $F(0)\ge0$. 
For a given $n\ge1$, consider the step function
$$F^{(n)}(y):=\sum_{k=0}^\infty F(\tfrac{k}{n})1_{\{{k\over n}\le y<{k+1\over n}\}},\quad y\ge0,$$
and put 
\[e^{-H^{(n)}(y)}:=\rE_1 \Big(\exp\{-\xi\circ F^{(n)}(\tfrac{ny}{n})\}\Big), \quad y\ge0,\]
where 
\[\xi\circ F^{(n)}(\tfrac{k}{n}):=\xi(\tfrac{k}{n})F(0)+\sum_{i=1}^{k} \xi(\tfrac{k-i}{n})(F(\tfrac{i}{n})-F(\tfrac{i-1}{n})).\]
Then the function $H^{(n)}(\cdot)$ satisfies a recursion
\begin{align*}
  H^{(n)}(\tfrac{k}{n})=F(\tfrac{k}{n})-F(\tfrac{k-1}{n})+H^{(n)}(\tfrac{k-1}{n})(1+\tfrac{b}{n}H^{(n)}(\tfrac{k-1}{n}))^{-1},\quad k\ge1,
  \end{align*}
  with $H^{(n)}(0)=F(0)$.
\end{lemma}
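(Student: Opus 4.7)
The plan is to derive the recursion by applying the Markov/branching property of $\xi(\cdot)$ at time $1/n$ (rather than at $(k-1)/n$), which cleanly peels off one step of the discrete convolution from its \emph{early} end. The base case is immediate: since $\xi(0)=1$ under $\rE_1$, we have $e^{-H^{(n)}(0)}=\rE_1(e^{-F(0)\xi(0)})=e^{-F(0)}$, so $H^{(n)}(0)=F(0)$.

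For $k\ge 1$, write $T_0:=F(0)$ and $T_j:=F(\tfrac{j}{n})-F(\tfrac{j-1}{n})$ for $j\ge1$, so that
\[\xi\circ F^{(n)}(\tfrac{k}{n})=\sum_{j=0}^{k}\xi(\tfrac{j}{n})\,T_{k-j}.\]
I first peel off the $j=0$ term as $\xi(0)T_k=T_k$. For the remaining sum I condition on $\xi(\tfrac{1}{n})=x$ and apply the Markov property: the shifted process $\tilde\xi(t):=\xi(t+\tfrac{1}{n})$ is then a continuous state critical branching process with $\tilde\xi(0)=x$, and the reindexing $j'=j-1$ gives
\[\sum_{j=1}^{k}\xi(\tfrac{j}{n})T_{k-j}=\sum_{j'=0}^{k-1}\tilde\xi(\tfrac{j'}{n})T_{k-1-j'}=\tilde\xi\circ F^{(n)}(\tfrac{k-1}{n}).\]
By the continuous state branching property one has $\rE_x(e^{-\Phi(\xi)})=(\rE_1(e^{-\Phi(\xi)}))^x$ for any non-negative linear functional $\Phi$ of the path (a pathwise consequence of the additive decomposition of $\xi$ under $\rE_{x_1+x_2}$ into two independent copies), hence
\[\rE\big(e^{-\tilde\xi\circ F^{(n)}((k-1)/n)}\,\big|\,\xi(\tfrac{1}{n})=x\big)=e^{-x\,H^{(n)}((k-1)/n)}.\]

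Taking the outer expectation over $\xi(\tfrac{1}{n})$ and invoking \eqref{cbp} with $u=1/n$ and $\lambda=H^{(n)}((k-1)/n)$ yields
\[\rE_1\big(e^{-H^{(n)}((k-1)/n)\,\xi(1/n)}\big)=\exp\Big(-\frac{H^{(n)}((k-1)/n)}{1+(b/n)H^{(n)}((k-1)/n)}\Big).\]
Multiplying by $e^{-T_k}$ and taking $-\ln$ produces the asserted recursion
\[H^{(n)}(\tfrac{k}{n})=T_k+H^{(n)}(\tfrac{k-1}{n})\big(1+\tfrac{b}{n}H^{(n)}(\tfrac{k-1}{n})\big)^{-1}.\]
The only delicate point is the invocation of the branching property for the \emph{multi-time} functional $\tilde\xi\circ F^{(n)}$; this extends from integer initial masses (where it is just independence of independent copies) to all $x\ge0$ by continuity of the log-Laplace transform in $x$, so it is a standard consequence of \eqref{cbp} and the Markov property rather than a real obstacle.
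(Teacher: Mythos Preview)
Your argument is correct and follows essentially the same route as the paper: both peel off the $\xi(0)\,(F(\tfrac{k}{n})-F(\tfrac{k-1}{n}))$ term, then condition on $\xi(\tfrac{1}{n})$ and apply \eqref{cbp}. The only cosmetic difference is that the paper packages your ``branching property'' step $\rE_x(e^{-\Phi(\xi)})=(\rE_1(e^{-\Phi(\xi)}))^x$ as Lemma~\ref{lla} and the subsequent evaluation via \eqref{cbp} as Lemma~\ref{illa}, whereas you invoke these facts directly.
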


\begin{proof}
Putting $f_k:=F(\tfrac{k}{n})$ and $f_{-1}:=0$, we get  
\begin{align*}
 H^{(n)}(\tfrac{k}{n})%&=-\ln\rE_1\Big(e^{-\int_0^{\tfrac{k}{n}}\xi(\tfrac{k}{n}-v)dF^{(n)}(v)}\Big) 
 &=-\ln\rE_1\Big(\exp\Big\{-\sum_{i=0}^k \xi(\tfrac{k-i}{n})(f_i-f_{i-1})\Big\}\Big)\\
 &=f_k-f_{k-1}-\ln\rE_1\Big(\exp\Big\{-\sum_{i=0}^{k-1} \xi(\tfrac{k-i}{n})(f_i-f_{i-1})\Big\}\Big),
  \end{align*}
  and by Lemma \ref{lla},
\begin{align*}
 H^{(n)}(\tfrac{k}{n})=f_k-f_{k-1}+G_{k}(\bar u+\tfrac{1}{n},\bar\lambda),
  \end{align*}
  with
$u_i:= \tfrac{k-i}{n}$ and $\lambda_i=f_{i-1}-f_{i-2}$ for $i\ge1$.
Since by Lemma  \ref{illa},
\[G_{k}(\bar u+\tfrac{1}{n},\bar\lambda)= (\tfrac{b}{n}+(G_{k-1}(\bar u,\bar\lambda)+\lambda_k)^{-1})^{-1},\]
we conclude
\begin{align*}
  H^{(n)}(\tfrac{k}{n})=f_k-f_{k-1}+(\tfrac{b}{n}+H^{-1}_n(\tfrac{k-1}{n}))^{-1}=f_k-f_{k-1}+H^{(n)}(\tfrac{k-1}{n})(1+\tfrac{b}{n}H^{(n)}(\tfrac{k-1}{n}))^{-1}.
  \end{align*}

\end{proof}
\begin{proposition}\label{inteq}
Let function $F(\cdot)$ have a continuous derivative $F':[0,\infty)\to[0,\infty)$ and let $F(0)\ge0$. The functions $H^{(n)}(\cdot)$, defined by Lemma \ref{love}, converge
\[H^{(n)}(y)\to H(y),\quad y\ge0,\quad n\to\infty,\]
to the solution of the Riccati  equation \eqref{heq}.
\end{proposition}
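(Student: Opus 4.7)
My plan is to convert the one-step recursion of Lemma \ref{love} into a discrete analogue of the Riccati integral equation \eqref{heq}, and then pass to the limit via a Gronwall comparison. The main technical point is the bookkeeping involved in the discrete-to-continuous Riemann-sum approximation.

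First, starting from Lemma \ref{love} and using the algebraic identity $h - h/(1+\tfrac{b}{n}h) = \tfrac{(b/n)h^2}{1+\tfrac{b}{n}h}$, I will rewrite the one-step recursion as
\[ H^{(n)}(\tfrac{k}{n}) - H^{(n)}(\tfrac{k-1}{n}) = F(\tfrac{k}{n}) - F(\tfrac{k-1}{n}) - \frac{\tfrac{b}{n}\,[H^{(n)}(\tfrac{k-1}{n})]^2}{1 + \tfrac{b}{n}H^{(n)}(\tfrac{k-1}{n})}, \]
and telescope, using $H^{(n)}(0)=F(0)$, to obtain the discrete Riccati relation
\[ H^{(n)}(\tfrac{k}{n}) = F(\tfrac{k}{n}) - \frac{b}{n}\sum_{i=0}^{k-1}\frac{[H^{(n)}(\tfrac{i}{n})]^2}{1+\tfrac{b}{n}H^{(n)}(\tfrac{i}{n})}. \]
A one-line induction using $h/(1+\tfrac{b}{n}h) \le h$ for $h\ge0$ together with $F$ non-decreasing then yields $0 \le H^{(n)}(\tfrac{k}{n}) \le F(\tfrac{k}{n})$, so the family $\{H^{(n)}\}$ is uniformly bounded on every compact interval $[0,y_1]$ by $F(y_1)$.

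Next I will verify that \eqref{heq} has a unique non-negative solution on any $[0,y_1]$. Since $F'$ is continuous, the integral equation is equivalent to the Cauchy problem $H'(y) = F'(y) - b H^2(y)$ with $H(0)=F(0)$; the same argument as for $H^{(n)}$ shows that a non-negative solution must satisfy $0 \le H \le F$, so the right-hand side is locally Lipschitz in $H$ on the strip $\{0 \le H \le F(y_1)\}$, and standard ODE theory gives existence and uniqueness on $[0,y_1]$.

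Finally, to prove convergence, I will set $\Delta^{(n)}(y) := H^{(n)}(y) - H(y)$ and compare both relations at $y = \tfrac{k}{n}$. Writing $(1+\tfrac{b}{n}H^{(n)})^{-1} = 1 + O(1/n)$ uniformly (via the bound $H^{(n)} \le F(y_1)$) and recognising the discrete sum as a Riemann sum for $b\int_0^y [H^{(n)}(v)]^2\,dv$, I expect to arrive at an estimate of the form
\[ |\Delta^{(n)}(y)| \le 2b F(y_1) \int_0^y |\Delta^{(n)}(v)|\,dv + \varepsilon_n(y), \quad 0 \le y \le y_1, \]
with $\varepsilon_n(y) \to 0$ uniformly on $[0,y_1]$; this remainder absorbs the continuity correction $|F(\tfrac{ny}{n}) - F(y)|$, the $O(1/n)$ denominator error, and the modulus $|H(v) - H(\tfrac{nv}{n})|$ which vanishes by uniform continuity of $H$. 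Gronwall's inequality then delivers $\sup_{0\le y\le y_1}|\Delta^{(n)}(y)|\to 0$, which is stronger than the required pointwise convergence. The main obstacle is precisely this last paragraph—cleanly separating the three small-error sources so that Gronwall applies—while everything else reduces to algebra and standard ODE theory.
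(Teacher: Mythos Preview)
Your argument is correct and follows essentially the same route as the paper's own proof: telescope the recursion of Lemma~\ref{love} into a discrete Riccati equation, establish a uniform bound on $H^{(n)}$, and then compare with the continuous solution $H$ via a Gronwall-type stability argument for $\Delta^{(n)}=H^{(n)}-H$.

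Two mild differences in execution are worth noting. First, you obtain the bound $0\le H^{(n)}(\tfrac{k}{n})\le F(\tfrac{k}{n})$ by a one-line induction directly on the recursion; the paper instead goes back to the probabilistic definition of $H^{(n)}$ and uses the Laplace transform \eqref{cbp} to get $H^{(n)}(\tfrac{k}{n})\le f_k/(1+bf_k\tfrac{k}{n})$. Your route is more elementary and entirely adequate, since only boundedness is needed. Second, you invoke Gronwall's inequality in one stroke, whereas the paper carries out the same estimate by hand: it first shows $\sup_{[0,y^*]}|\Delta_n|\to0$ on a sufficiently short interval $[0,y^*]$, then iterates to $[0,2y^*]$, $[0,3y^*]$, etc. The two arguments are equivalent; yours is cleaner, while the paper's iteration mirrors the pattern it uses elsewhere (Proposition~\ref{cth}).
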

\begin{proof}
Applying a Taylor expansion to the recursion of Lemma \ref{love}, we obtain
\begin{align*}
H^{(n)}(\tfrac{k}{n})&=f_k-f_{k-1}+H^{(n)}(\tfrac{k-1}{n})-\tfrac{b}{n}(H^{(n)}(\tfrac{k-1}{n}))^2+\epsilon_n(k),\\
\epsilon_n(k)&=H^{(n)}(\tfrac{k-1}{n})\Big((1+\tfrac{b}{n}H^{(n)}(\tfrac{k-1}{n}))^{-1}-1+\tfrac{b}{n}H^{(n)}(\tfrac{k-1}{n})\Big)
=\frac{(\tfrac{b}{n})^2(H^{(n)}(\tfrac{k-1}{n}))^3}{1+\tfrac{b}{n}H^{(n)}(\tfrac{k-1}{n})}.
 \end{align*}
By reiterating this recursion, we get
\be\label{Heq}
H^{(n)}(\tfrac{k}{n})=f_k-\tfrac{b}{n}\sum_{i=0}^{k-1}(H^{(n)}(\tfrac{i}{n}))^2+\sum_{i=1}^{k}\epsilon_n(i).
\ee
To prove the lemma, it suffices to verify that
\be\label{sist}
 \Delta_n(k):=H^{(n)}(\tfrac{k}{n})-H(\tfrac{k}{n})\stackrel{y\,}{\Rightarrow}0,\quad n\to\infty,
 \ee
where $H^{(n)}(\cdot)$ satisfies \eqref{Heq}, with $f_i=F(\tfrac{i}{n})$. 
To this end, note that
\[\sum_{i=0}^{k} \xi(\tfrac{k-i}{n})(f_i-f_{i-1})=f_{k}\xi(0)+\sum_{i=0}^{k-1} (\xi(\tfrac{k-i}{n})-\xi(\tfrac{k-i-1}{n}))f_i\le f_{k}\xi(\tfrac{k}{n})\]
implies an upper bound
\[H^{(n)}(\tfrac{k}{n})\le -\ln \rE_1\Big(e^{-f_{k}\xi(\tfrac{k}{n})}\Big)\stackrel{(\ref{cbp})}{=} \frac{f_k}{1+bf_k\tfrac{k}{n}}\,,
 \]
that ensures $H^{(n)}(\tfrac{k}{n})\le C(y)$, provided $f_k\le C_1(y)$ for all $k\le ny$, so that
$\sum_{i=1}^{ny}\epsilon_n(i)\stackrel{y\,}{\Rightarrow}0$ as $n\to\infty$.

This and \eqref{Heq} entail
\[\Delta_n(\tfrac{k}{n})=-\tfrac{b}{n}\sum_{i=0}^{k-1}\Delta_n(\tfrac{i}{n})(H^{(n)}(\tfrac{i}{n})+H(\tfrac{i}{n}))+\delta_n(k),\]
where $\delta_n(ny)\stackrel{y\,}{\Rightarrow}0$ as $n\to\infty$. In view of this relation, we can find a sufficiently small $y^*>0$, such that 
\[\sup_{0\le y\le y^*} |\Delta_n(ny)|\to0,\quad n\to\infty.\]
It follows,
\[\Delta_n(\tfrac{k}{n})=-\tfrac{b}{n}\sum_{i=ny^*}^{k-1}\Delta_n(\tfrac{i}{n})(H^{(n)}(\tfrac{i}{n})+H(\tfrac{i}{n}))+\delta'_n(k),\]
where $\delta'_n(ny)\stackrel{y\,}{\Rightarrow}0$ as $n\to\infty$. This in turn, gives 
\[\sup_{0\le y\le 2y^*} |\Delta_n(ny)|\to0,\quad n\to\infty,\]
and proceding in the same manner, we arrive at \eqref{sist}.

\end{proof}

\subsection{Laplace transforms for $\xi\circ F(\cdot)$}\label{sec:cb}
Notice that the Riemann-Stieltjes integrals appearing in this paper are understood as
\[
\int_0^tf(u)dF(u):=F(0)f(0)+\int_{(0,t]}f(u)dF(u).
\]
Referring to Proposition \ref{inteq}, we treat the Riemann-Stieltjes integral 
$$\xi\circ F(y)=\int_0^{y}\xi(y-v)dF(v)$$
 as a random variable satisfying
$\rE_1(e^{-\xi\circ F(y)})=e^{-H(y)}$. 
This interpretation will be extended to the fdd's of $\xi\circ F(\cdot)$ in terms of the log-Laplace transforms 
\begin{equation}
  \label{dHp}
H_p(\bar u,\bar\lambda):=-\ln\rE_1\Big(e^{-\lambda_1\xi\circ F(u_1)-\ldots-\lambda_p\xi\circ F(u_p)}\Big).
\end{equation}

\begin{lemma} \label{lilla}
Under the assumptions of Proposition \ref{inteq}, given \eqref{ula}, the function  \eqref{dHp} satisfies
\begin{equation}
  \label{Hp}
H_p(\bar u+y,\bar\lambda)= H_{p-1}(\bar u,\bar\lambda)+F^\circ_p(y)-b\int_0^yH_p^2(\bar u+v,\bar\lambda)dv,
\end{equation}
where 
$F^\circ_p(y):=\sum_{i=1}^{p}\lambda_i (F(u_i+y)-F(u_i))$ for $y>0$, and $F^\circ_p(0):=\lambda_pF(0)$.
\end{lemma}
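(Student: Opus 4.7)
My plan is to prove \eqref{Hp} by imitating the scalar argument of Proposition \ref{inteq} in the $p$-dimensional setting. The crucial observation is that $\sum_{i=1}^p \lambda_i \xi \circ F^{(n)}(u_i + y)$, with $\xi\circ F^{(n)}$ as in Lemma \ref{love}, is still a linear combination of $\xi$-values at the grid points $0,\tfrac{1}{n},\tfrac{2}{n},\ldots$, so the joint Laplace transform formula provided by Lemma \ref{lla} applies directly.

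Concretely, I would define
\[H_p^{(n)}(\bar u + y,\bar\lambda):=-\ln\rE_1\exp\Bigl\{-\textstyle\sum_{i=1}^p\lambda_i\,\xi\circ F^{(n)}(u_i+y)\Bigr\},\]
rearrange the double sum to collect the coefficient $c_l^{(n)}(y)$ of each $\xi(\tfrac{l}{n})$, and apply Lemma \ref{lla} to identify $H_p^{(n)}(\bar u+y,\bar\lambda)$ with a value of $G_{n(u_1+y)+1}$ evaluated at suitably ordered time and weight arguments. Incrementing $y$ to $y+\tfrac{1}{n}$ and using the first identity of Lemma \ref{illa} then yields a discrete one-step Riccati recursion
\[H_p^{(n)}\bigl(\bar u+\tfrac{k+1}{n},\bar\lambda\bigr)=\sum_{i=1}^p\lambda_i\bigl(F(u_i+\tfrac{k+1}{n})-F(u_i+\tfrac{k}{n})\bigr)+H_p^{(n)}\bigl(\bar u+\tfrac{k}{n},\bar\lambda\bigr)\bigl(1+\tfrac{b}{n}H_p^{(n)}\bigl(\bar u+\tfrac{k}{n},\bar\lambda\bigr)\bigr)^{-1},\]
of the same shape as in Lemma \ref{love}; the inhomogeneous term accumulates exactly the increments of $F_p^\circ$. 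Taylor expanding $(1+\tfrac{b}{n}H)^{-1}$ as in Proposition \ref{inteq} and iterating from $k=0$ to $k=ny-1$ give the limiting integral equation \eqref{Hp} upon passing $n\to\infty$. The boundary value $H_p(\bar u,\bar\lambda)=H_{p-1}(\bar u,\bar\lambda)+\lambda_pF(0)$ arises as the initial value $H_p^{(n)}(\bar u,\bar\lambda)$ of the recursion, since $\xi\circ F(u_p)=F(0)\,\xi(0)=F(0)$ is deterministic when $u_p=0$; this is exactly the convention $F_p^\circ(0)=\lambda_pF(0)$.

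The main obstacle is the multidimensional index bookkeeping in the rearrangement and the verification that the one-step increment combines cleanly into $\sum_i\lambda_i(F(u_i+\tfrac{k+1}{n})-F(u_i+\tfrac{k}{n}))$; in particular, one must track how the boundary mass $F(0)$ from the $u_p=0$ term is absorbed into the initial value $H_p^{(n)}(\bar u,\bar\lambda)$ rather than into the running increments. Once this is set up, convergence $H_p^{(n)}\to H_p$ and verification of \eqref{Hp} proceed by the same Taylor-expansion-plus-uniqueness argument used in Proposition \ref{inteq}, with the uniform-in-$n$ bound $H_p^{(n)}(\bar u+y,\bar\lambda)\le\sum_i\lambda_iF(u_i+y)$ (obtained by dropping the non-positive $-\tfrac{b}{n}H^2$ terms in the iterated recursion) providing the control on the Taylor remainders.
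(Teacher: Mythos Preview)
Your proposal is correct and follows essentially the same strategy as the paper: define the discrete approximation $H_p^{(n)}$ via $\xi\circ F^{(n)}$, exploit Lemmas \ref{lla} and \ref{illa} to obtain a one-step Riccati recursion with inhomogeneity given by the increments of $F_p^\circ$, and pass to the limit exactly as in Proposition \ref{inteq}. The paper explicitly calls this the route of ``tedious details in terms of the discrete version'' and declines to spell it out; instead it records the ``key new argument'' in continuous notation, using the Markov property of $\xi(\cdot)$ to split each integral $\int_0^{u_i+y}=\int_0^{u_i}+\int_{u_i}^{u_i+y}$ and obtain $e^{-H_p(\bar u+y,\bar\lambda)}=\rE_1\bigl(e^{-\xi\circ F_p(y)}\bigr)$ with $F_p(y)=H_{p-1}(\bar u,\bar\lambda)+F_p^\circ(y)$, thereby reducing the $p$-dimensional statement directly to an application of Proposition \ref{inteq} with $F$ replaced by $F_p$. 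So the only difference is packaging: you re-run the Taylor-expansion argument of Proposition \ref{inteq} in the $p$-dimensional setting, whereas the paper reduces to the one-dimensional case already proven; both arrive at \eqref{Hp} with the same initial value $H_{p-1}(\bar u,\bar\lambda)+\lambda_pF(0)$.
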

\begin{proof} 
The proof of Lemma  \ref{lilla} uses similar argument as Lemma  \ref{love} and Proposition \ref{inteq}, with the main idea being
to demonstrate that the step function version of \eqref{dHp}, defined by
\[e^{-H_{p}^{(n)}(\bar u+y,\bar\lambda)}:=\rE_1\Big(\exp\Big\{-\sum_{j=1}^p\lambda_i\xi\circ F^{(n)}(\tfrac{nu_i}{n}+\tfrac{ny}{n})\Big\}\Big),
\]
converges $H_{p}^{(n)}(\bar u+y,\bar\lambda)\to H_{p}(\bar u+y,\bar\lambda)$ to the solution of \eqref{Hp}  as $n\to \infty$.
Instead of giving tedious details in terms of the discrete version of \eqref{dHp}, we indicate below the key new argument in terms of continuous version of the integral $\xi\circ F(\cdot)$.
%\begin{align*}
%\sum_{j=1}^p\lambda_j\xi*F^{(n)}(\tfrac{k_j}{n})= \sum_{j=1}^p\Big(\xi(\tfrac{k_j}{n})F(0)+\sum_{i=1}^{k_j} \xi(\tfrac{k_j-i}{n})(F(\tfrac{i}{n})-F(\tfrac{i-1}{n}))\Big)
%\end{align*} 
%
%\begin{align*}
%\sum_{i=1}^p\lambda_i\xi*F^{(n)}(\tfrac{k_i+k}{n})&= \sum_{i=1}^p\lambda_i\sum_{j=0}^{k_i+k} \lambda_i\xi(\tfrac{k_i+k-j}{n})(F(\tfrac{j}{n})-F(\tfrac{j-1}{n}))\\
%&= \sum_{i=1}^p\lambda_i\sum_{j=0}^{k_i} \xi(\tfrac{k_i+k-j}{n})(F(\tfrac{j}{n})-F(\tfrac{j-1}{n}))
%+\sum_{j=1}^{k} \xi(\tfrac{k-j}{n})\sum_{i=1}^p\lambda_i(F(\tfrac{k_i+j}{n})-F(\tfrac{k_i+j-1}{n}))
%\end{align*} 
%
%\[\rE\Big(\exp\Big\{-\sum_{i=1}^p\lambda_i\sum_{j=0}^{k_i} \xi(\tfrac{k_i+k-j}{n})(F(\tfrac{j}{n})-F(\tfrac{j-1}{n}))\Big\}\,|\,\xi(\tfrac{0}{n}), \ldots,\xi(\tfrac{k}{n})\Big)
%=e^{-\xi(\tfrac{k}{n}) H_{p-1}^{(n)}(\bar u,\bar\lambda)}\]
%
%\[e^{-H_{p}^{(n)}(\bar u+\tfrac{k}{n},\bar\lambda)}=\rE_1\Big(\exp\Big\{-\xi(\tfrac{k}{n}) H_{p-1}^{(n)}(\bar u,\bar\lambda)
%-\xi*F_p^{(n)}(\tfrac{k}{n}))\Big\}\Big).
%\]
%
%\[F_p^{(n)}(y):=\sum_{i=1}^{p}\lambda_i (F^{(n)}(u_i+y)-F^{(n)}(u_i)),\quad y>0.\]
%

 Due to \eqref{dHp}, we have
\begin{align*}
  e^{-H_p(\bar u,\bar\lambda)} &=\rE_1\Big(\exp\Big\{-\sum\nolimits_{i=1}^p\lambda_i \xi\circ dF(u_i)\Big\}\Big),
 \end{align*} 
which in view of  \eqref{cbp} and \eqref{dHp}, yields
\begin{align*}
  e^{-H_p(\bar u+y,\bar\lambda)} &=\rE_1\Big(\exp\Big\{-\sum_{i=1}^p\lambda_i\int_0^{u_i+y}\xi(u_i+y-v)dF(v)\Big\}\Big).
 \end{align*} 
 Splitting each of the integrals in two parts $\int_0^{u_i+y}=\int_0^{u_i}+\int_{u_i}^{u_i+y}$, we find
\begin{align*}
  \sum_{i=1}^p\int_0^{u_i+y}\xi(u_i+y-v)dF(v)&= \sum_{i=1}^{p-1}\int_0^{u_i}\xi(u_i+y-v)dF(v)+\int_0^{y}\xi(y-v)dF^\circ_p(v),
 \end{align*} 
 and then, using the Markov property of the process $\xi(\cdot)$
\begin{align*}
 \rE\Big(\exp\Big\{- \sum_{i=1}^{p-1}\lambda_i\int_0^{u_i}\xi(u_i+y-v)dF(v)\Big\}\,&|\,\xi(u), 0\le u\le y\Big) 
% \\
% &= \rE_{\xi(y)}\Big(\exp\Big\{- \sum_{i=1}^{p-1}\lambda_i\xi_\gamma(u_i)\Big\}\Big) 
 = e^{-\xi(y)H_{p-1}(\bar u,\bar\lambda) },
 \end{align*} 
 we obtain
\begin{align*}
  e^{-H_p(\bar u+y,\bar\lambda)} &=\rE_1\Big(\exp\Big\{- \xi(y) H_{p-1}(\bar u,\bar\lambda)-\xi\circ F^\circ_p(y) \Big\}\Big)=\rE_1\Big(e^{-\xi\circ F_p(y)}\Big),
 \end{align*} 
 where $F_p(y):=H_{p-1}(\bar u,\bar\lambda)+F^\circ_p(y)$. After this, it remains to apply Proposition \ref{inteq}. 
\end{proof}

\section{Main results}\label{sec:lt}

The aim of this  chapter is to establish an fdd-convergence result for the vector $(X_1(\cdot),\ldots, X_q(\cdot))$ composed of the population counts 
corresponding to different individual scores $\chi_1(\cdot),\ldots,\chi_q(\cdot)$, which may depend on each other.% conditioned on $Z_0=n$ as $n\to\infty$.
%$$\{(X_1^{(n)}(t),\ldots, X_q^{(n)}(t)),\ t\ge0\}\stackrel{\rm fdd}{=}\{(X_1(t),\ldots, X_q(t)),\ t\ge0\,|\,Z_{0}=n\}$$
%Theorem \ref{nrta} in Section \ref{sec:cb} deals with the case of $q=1$ and $m_\chi<\infty$. Then, in Section \ref{sec:lt1} we consider the case of $q=1$ and $m_\chi=\infty$. Finally, Theorem \ref{nrtm} in Section \ref{lt3} takes up the general case with arbitrary $q$.

\subsection{Limit theorems}\label{sec:cb}

%The next result is a direct extension of the limit theorem \eqref{cgw} for Galton-Watson processes with non-overlapping generations. 
\begin{theorem}\label{nrt} Consider a population count defined by \eqref{X}.  If (\ref{crit}), $a<\infty$, $m_\chi<\infty$, then 
  \begin{equation}
    \label{nrta}
\{n^{-1}X(nu),\ u>0\,|\,Z_0=n\}\stackrel{\rm fdd\ }{\longrightarrow}\{m_\chi \xi(u a^{-1}),\ u>0\,|\,\xi(0)=1\},\ n\to\infty,
  \end{equation}
where $\xi(\cdot)$ is the continuous state branching process satisfying  \eqref{cbp}.
\end{theorem}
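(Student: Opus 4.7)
The plan is to apply Proposition \ref{cth} after setting up the problem via Lemma \ref{lel}, and then argue by induction on the dimension $p$ of the fdd's.

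\textbf{Setup.} Fix $u_1 > \cdots > u_p > 0$ and $\lambda_i \ge 0$. Set $s_i := u_i - u_p$, so $s_1 > \cdots > s_p = 0$, and define the individual score $\psi_n(t) := \sum_i \tfrac{\lambda_i}{n}\chi(n s_i + t)$, $t \in \mathbb Z$. By Lemma \ref{lel} and \eqref{layla},
\[
\rE_n\Big(\exp\Big\{-\sum_i \tfrac{\lambda_i}{n} X(n u_i)\Big\}\Big) = e^{-n \Lambda_n(n u_p)},
\]
where $\Lambda_n$ is the log-Laplace transform associated with $\psi_n$. By Lemma \ref{lla}, the theorem reduces to showing $n\Lambda_n(ny) \stackrel{y}{\to} r(y) := G_p(\bar s/a + y/a, m_\chi \bar\lambda)$ and then evaluating at $y = u_p$. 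A change of variable $v \mapsto v/a$ in Lemma \ref{illa} (applicable because $s_p = 0$) shows that this $r$ solves the Riccati equation \eqref{rc} with the \emph{constant} free term
\[
B(y) \equiv B := G_{p-1}(\bar s'/a, m_\chi \bar\lambda') + m_\chi \lambda_p,
\]
where $\bar s' := (s_1, \ldots, s_{p-1})$, $\bar\lambda' := (\lambda_1, \ldots, \lambda_{p-1})$, and $G_0 := 0$. Thus, by Proposition \ref{cth}, it suffices to verify $n B_n(n y) \to B$ for $B_n$ given by \eqref{by}.

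\textbf{Induction on $p$.} The three summands of \eqref{by} are handled as follows. First, $n\, e_2^{\Lambda_n(ny)} = O((n\Lambda_n(ny))^2/n) = o(1)$, because $n\Lambda_n \le n B_n$ is locally bounded (from \eqref{NRE} and $\Psi \ge 0$). Second, for the convolution $D_n * U(n y)$: the bound $e_1^{\psi_n(t)} \le \psi_n(t)$ in \eqref{dy} together with the exponential factor $\le 1$ gives $n D_n(t) \le \sum_i \lambda_i m(n s_i + t)$, where $m(t) := \rE(\chi(t))$; convolving with $U$ and invoking \eqref{gM}, only the $i = p$ component survives in the limit, since for $i < p$ we have $\sum_{k=0}^{n y} m(n s_i + k) \le \sum_{j \ge n s_i} m(j) \to 0$ as a tail of the convergent series $\sum_j m(j) = a m_\chi$; a matching lower bound from Taylor expansion of $e_1^x$ and the exponential yields $n D_n * U(ny) \to m_\chi \lambda_p$. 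Third, the middle sum is where the induction enters: for fixed $j \ge 1$, only components $i < p$ (with $n s_i \ge j$) contribute to $\Lambda_n(-j)$, and by the induction hypothesis for the $(p-1)$-dimensional fdd convergence at times $(n s_i - j)$,
\[
n \Lambda_n(-j) \to G_{p-1}(\bar s'/a, m_\chi \bar\lambda'), \qquad n \to \infty.
\]
Jensen's inequality $n \Lambda_n(-j) \le \sum_{i<p} \lambda_i \rE_1(X(n s_i - j)) \le C$ gives uniform boundedness. Lemma \ref{rty} applied to $r^{(n)}(j) := n \Lambda_n(-j)$, together with the expansion $e_1^x = x + O(x^2)$ and $\Lambda_n(-j) = O(1/n)$, then delivers $n \sum_j e_1^{\Lambda_n(-j)} R_{n y}(j) \stackrel{y}{\to} G_{p-1}(\bar s'/a, m_\chi \bar\lambda')$. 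Summing the three contributions gives $n B_n(n y) \to B$, and Proposition \ref{cth} yields $n\Lambda_n(n u_p) \to r(u_p) = G_p(\bar u/a, m_\chi \bar\lambda)$, completing the induction. The base case $p = 1$ is immediate: $\psi_n(t)$ vanishes on negatives, so the middle sum is zero and $B \equiv m_\chi \lambda_1$.

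\textbf{Main obstacle.} The delicate point is the induction step for the middle term: one must verify that $n\Lambda_n(-j) \to G_{p-1}(\bar s'/a, m_\chi \bar\lambda')$ in the uniform sense demanded by Lemma \ref{rty}---this is ultimately manageable because $R_{n y}(j)$ concentrates on values of $j$ of bounded magnitude, so only the regime $j/n \to 0$ actually contributes---and that the error terms in the Taylor expansions of $e_1^{\psi_n}$, $e_2^{\Lambda_n}$, and of the exponential $e^{-\sum \nu(j)\Lambda_n(t-j)}$ appearing in $D_n$ make genuinely $o(1)$ contributions after convolving with $U$ and scaling by $n$.
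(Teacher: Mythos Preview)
Your proposal is correct and follows essentially the same route as the paper: induction on $p$, reduction to Proposition \ref{cth} via Lemma \ref{lel}, and verification of $nB_n(ny)\to r_{p-1}(0)+m_\chi\lambda_p$ through the three-term decomposition \eqref{by}, with Lemma \ref{rty} carrying the inductive step for the middle term. The only substantive difference is in detail rather than strategy: where you invoke ``a matching lower bound from Taylor expansion'' for $nD_n*U(ny)$, the paper writes out the exact identity $D_n(t)=\rE(\psi_{n,p}(t))-\rE(e_2^{\psi_{n,p}(t)})-\rE\big(e_1^{\psi_{n,p}(t)}e_1^{\sum_j\Lambda_{n,p}(t-j)\nu(j)}\big)$ and dispatches the two error terms separately (relations \eqref{Nbn23} and \eqref{Nbn22}) via truncation at a level $k$ and Fatou's lemma---these are precisely the $o(1)$ contributions you flag in your ``Main obstacle'' paragraph.
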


There are three new features in the limiting process of \eqref{nrta} compared to that of \eqref{cgw}:
\begin{description}
\item[ -] the continuous time parameter $u$ does not include zero, reflecting the fact that it may take some time for the distribution of ages of coexisting individuals to stabilise,
\item[ -] the time scale $a^{-1}$ corresponds to the scaling by the average length of overlapping generations, 
\item[ -] the factor $m_\chi$ accounts for the average $\chi$-score in a population with overlapping generations.
\end{description}

\begin{theorem}\label{nrtg} 
Consider a population count defined by \eqref{X}. Assume (\ref{crit}), $a<\infty$,  \eqref{gammo}, %In the case $m_\chi<\infty$, without loss of generality, put $\mathcal{L}(t)= m_\chi$ for all $t\ge1$.
and in the case $m_\chi=\infty$, assume additionally 
\begin{equation}\label{i}
    \rE(\chi^2(t))=o(t^{2\gamma}\mathcal{L}^2(t)),\quad t\to\infty.
\end{equation}
 Then %the weak convergence of fdd holds
  \begin{equation}
    \label{nrtb}
\{n^{-1-\gamma}\mathcal{L}^{-1}(n)X(nu),\ u>0\,|\,Z_0=n\}\stackrel{ \rm fdd\ }{\longrightarrow}\{a^{\gamma-1}\xi_\gamma(ua^{-1}),\ u>0\,|\,\xi(0)=1\},\quad n\to\infty,
  \end{equation}
where $\xi_\gamma(\cdot)$ is given by  \eqref{asmaa}, which is understood according to the previous chapter.
%where $\xi_0(y)=\xi(y)$ is the continuous state branching process satisfying (\ref{cbp}) with $\xi(0)=1$ and $\xi_\gamma(y)=\int_0^y\xi(y-u)du^\gamma$ for $\gamma>0$.
\end{theorem}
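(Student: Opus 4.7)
The plan is to apply Proposition \ref{cth} and argue by induction on $p$, the fdd dimension. Fix $p$, coefficients $\bar\lambda = (\lambda_1,\ldots,\lambda_p)$, and times $0 < u_1 < \ldots < u_p$. By Lemma \ref{lel}, with offsets $t_i = n(u_i - u_1)$ and time shift $nu_1$, the joint Laplace transform to be analyzed reduces to $\exp(-n\Lambda_n(nu_1))$, where $\Lambda_n := \Lambda^{[\psi_n]}$ is built from the composite score $\psi_n(s) := n^{-1-\gamma}\mathcal{L}^{-1}(n)\sum_i \lambda_i\,\chi(n(u_i - u_1) + s).$ A direct reading of \eqref{X+} gives $n\Lambda_n(ny) = -\ln\rE_n\exp\bigl(-n^{-1-\gamma}\mathcal{L}^{-1}(n)\sum_i \lambda_i X(n(y + u_i - u_1))\bigr),$ so convergence $n\Lambda_n(ny) \to r(y)$ at arbitrary $y > 0$ is exactly fdd convergence at the shifted vector $\{y + u_i - u_1\}$, and I aim to identify $r(u_1)$ with the target Laplace exponent.

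Writing the branching renewal equation \eqref{NRE} for $\Lambda_n$, I verify the hypotheses of Proposition \ref{cth} by computing $nB_n(ny) \to B(y)$ from the three summands in \eqref{by}. The $e_2^{\Lambda_n(ny)}$ piece is $O(\Lambda_n^2)$, hence negligible after multiplication by $n$. For the $D_n \ast U$ piece I linearize $e_1^{\psi_n(t)} \approx \psi_n(t)$, using condition \eqref{i} to show $\rE(e_2^{\psi_n(t)}) \leq \tfrac{1}{2}\rE(\psi_n^2(t)) = o(n^{-2})$ uniformly in $t = O(n)$; combined with $U(j) \to a^{-1}$ and the regular variation \eqref{gammo}, this contribution converges to $a^{-1}\sum_i \lambda_i[(u_i - u_1 + y)^\gamma - (u_i - u_1)^\gamma]$. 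For the residual sum $n\sum_j e_1^{\Lambda_n(-j)}R_{ny}(j)$, since $\Lambda_n(-j) \to 0$ one has $ne_1^{\Lambda_n(-j)} \sim n\Lambda_n(-j)$, and by the same identification $n\Lambda_n(-nv) = -\ln\rE_n\exp\bigl(-n^{-1-\gamma}\mathcal{L}^{-1}(n)\sum_{i:\,u_i - u_1 > v}\lambda_i X(n(u_i - u_1 - v))\bigr),$ which by the inductive hypothesis on the (at most) $p-1$ positive components converges to $r^-(v) := -\ln\rE_1\exp\bigl(-a^{\gamma-1}\sum_{i:\,u_i - u_1 > v}\lambda_i \xi_\gamma((u_i - u_1 - v)/a)\bigr).$ Lemma \ref{rty} extracts the residual limit $r^-(0)$, yielding $B(y) = r^-(0) + a^{-1}\sum_i \lambda_i[(u_i - u_1 + y)^\gamma - (u_i - u_1)^\gamma]$. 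The base case $p = 1$ is direct: $\psi_n(s) = 0$ for $s < 0$, so the residual term vanishes and $B(y) = a^{-1}\lambda_1 y^\gamma$.

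Proposition \ref{cth} then delivers $n\Lambda_n(ny) \to r(y)$, where $r$ uniquely solves the Riccati integral equation $r(y) = B(y) - ba^{-1}\int_0^y r^2(v)\,dv$. To identify $r$, introduce the time-rescaled $\tilde\xi(v) := \xi(v/a)$, which by \eqref{cbp} is a continuous state branching process with parameter $b/a$, and verify that $a^{\gamma-1}\xi_\gamma(v/a) = a^{-1}\tilde\xi \circ F(v)$ for $F(v) = v^\gamma$. Applying Lemma \ref{lilla} to $\tilde\xi$ with this integrator, reversed offsets $\hat u_j = u_{p+1-j} - u_1$, and coefficients $\hat\lambda_j = a^{-1}\lambda_{p+1-j}$, the candidate exponent $\tilde H(y) := -\ln\rE_1\exp\bigl(-a^{\gamma-1}\sum_i \lambda_i\xi_\gamma((u_i - u_1 + y)/a)\bigr)$ satisfies the same Riccati equation, with the same initial value $\tilde H(0) = r^-(0)$ because $F(0) = 0$ collapses the $\hat\lambda_p$-term. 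Uniqueness then gives $r = \tilde H$, and evaluation at $y = u_1$ yields \eqref{nrtb}.

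The principal technical obstacle is the asymptotic analysis of the residual term, whose limit is the full $(p-1)$-dim log-Laplace rather than a mean, so the induction must be set up to propagate this information through the $R_{ny}(j)$-weighted sum via Lemma \ref{rty}. Condition \eqref{i} is the precise tool for eliminating the $O(\rE(\chi^2))$ quadratic correction in $D_n$ that would otherwise contaminate the free term in the regime $m_\chi = \infty$; together with \eqref{gammo} it ensures that the Riccati input $B(y)$ emerges with the correct integrator $y^\gamma$.
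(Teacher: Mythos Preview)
Your approach mirrors the paper's: induction on $p$, reduction to Proposition \ref{cth}, and a three-piece analysis of $B_n$ via \eqref{by}. The identification of the limit through Lemma \ref{lilla} and the handling of the residual sum via Lemma \ref{rty} and the inductive hypothesis are both correct and match the paper's argument.

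There is, however, a genuine gap in your treatment of $D_n*U$. Recall from \eqref{dnt} that $D_n(t)=\rE\bigl(e_1^{\psi_n(t)}e^{-\sum_{j\ge1}\Lambda_n(t-j)\nu(j)}\bigr)$, and the paper's decomposition reads
\[
D_n(t)=\rE(\psi_n(t))-\rE\bigl(e_2^{\psi_n(t)}\bigr)-\rE\Bigl(e_1^{\psi_n(t)}\,e_1^{\sum_{j\ge1}\Lambda_n(t-j)\nu(j)}\Bigr).
\]
Your linearization controls only the middle term $\rE(e_2^{\psi_n})$; you do not address the third, cross term, which is exactly relation \eqref{Nbn22}. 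This term does \emph{not} vanish automatically, because $\psi_n(t)$ (built from the score $\chi$) and the litter sizes $\nu(j)$ belong to the same individual and are in general dependent. In the regime $m_\chi=\infty$ the paper disposes of \eqref{Nbn22} by splitting on $\{N>n\epsilon\}$ versus $\{N\le n\epsilon\}$: on the first event one applies Cauchy--Schwarz together with Markov's inequality $\rP(N>n\epsilon)\le C/(n\epsilon)^2$ and the bound $n\sum_{t}\rE(\psi_n^2(t))\to0$ (which is where condition \eqref{i} enters a second time, via \eqref{CS}); on the second event one uses the a priori bound $\sup_t n\Lambda_n(t)\le C$ (the analogue of \eqref{drug}, obtained from Proposition \ref{rt}) to get a contribution of order $\epsilon$. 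So condition \eqref{i} is needed not only for the $e_2$ correction you isolate, but also for the cross term; your closing remark understates its role.

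A minor imprecision: the pointwise claim $\rE(e_2^{\psi_n(t)})=o(n^{-2})$ uniformly in $t=O(n)$ fails for small $t$, since the anchor term $\chi(t)$ has no second-moment control there. What actually holds, and what the paper uses in \eqref{CS}, is the summed statement $n\sum_{t=1}^{ny}\rE(\psi_n^2(t))\to0$, which follows from \eqref{i} after integrating the regularly varying bound.
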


The next result extends Theorems \ref{nrt} and \ref{nrtg} to the case of several population counts.  

\begin{theorem}\label{nrtm} Consider $q\ge1$ population counts $X_1(t),\ldots,X_q(t)$, each defined by Definition \ref{def1} in terms of different individual scores $\chi_1(t),\ldots,\chi_q(t)$. Assume (\ref{crit}), $a<\infty$, and  \eqref{gammo}, with $\gamma=\gamma_j$ and $\mathcal{L}=\mathcal{L}_j$ for the $\chi_j$-score, $j=1,\ldots,q$.  If $m_{\chi_j}=\infty$, assume additionally condition \eqref{i} for the $\chi_j$-score. 

Then, as $n\to\infty$, %the weak convergence of fdd holds
  \[
\Big({X_1(nu)\over n^{1+\gamma_1}\mathcal{L}_1(n)},\ldots,{X_q(nu)\over n^{1+\gamma_q}\mathcal{L}_q(n)}\,|\,Z_0=n\Big)_{u>0}\stackrel{\rm fdd\ }{\longrightarrow}\Big(a^{\gamma_1-1}\xi_{\gamma_1}(ua^{-1}),\ldots, a^{\gamma_q-1}\xi_{\gamma_q}(ua^{-1})\,|\,\xi(0)=1\Big)_{u>0}.
  \]
\end{theorem}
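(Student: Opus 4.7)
The approach is to reduce the multivariate fdd-convergence to the univariate branching-renewal-equation framework of Section \ref{sec:bre}, by bundling everything into a single composite score, and then to identify the resulting Riccati equation \eqref{rc} with the Riccati equation satisfied by the joint Laplace transform of the limit process. Fix finitely many time points $u_{ji} > 0$ and exponents $\lambda_{ji}\ge 0$ for $j=1,\ldots,q$ and $i = 1,\ldots, p_j$, and define the scaled composite score
\[
\psi_n(t) := \sum_{j=1}^q \sum_{i=1}^{p_j} \frac{\lambda_{ji}}{n^{1+\gamma_j}\mathcal{L}_j(n)}\, \chi_j(nu_{ji}+t), \quad t\in \mathbb{Z}.
\]
Iterating the substitution of Lemma \ref{lel} across the $q$ coordinates yields that the joint Laplace transform of the scaled vector equals $\exp\{-n\Lambda_n(0)\}$, where $\Lambda_n := \Lambda^{[\psi_n]}$ satisfies the branching renewal equation \eqref{NRE}. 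Thus the task reduces to applying Proposition \ref{cth} to $\Lambda_n$.

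For the asymptotics of the free term $B_n$ given by \eqref{by}, the piece $e_2^{\Lambda_n(ny)}$ is $O(n^{-2})$ by \eqref{e12} together with the crude bootstrap bound $\Lambda_n\le B_n$; the boundary term $\sum_k e_1^{\Lambda_n(-k)}R_{ny}(k)$ is handled by Lemma \ref{rty}; and the convolution $D_n*U(ny)$, after Taylor expansion of \eqref{dy} and an elementary renewal argument using \eqref{ert} and \eqref{gammo}, converges locally uniformly to a linear combination of terms of the form $\lambda_{ji}a^{\gamma_j-1}\big((u_{ji}+ya^{-1})^{\gamma_j}-u_{ji}^{\gamma_j}\big)$. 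Possibly dependent cross-moments $\rE(\chi_j(s)\chi_{j'}(s'))$ enter at quadratic order; they are controlled by Cauchy-Schwarz combined with \eqref{i} for the heavy-tailed coordinates, and thus contribute at order $o(n^{-1})$. Proposition \ref{cth} then yields $n\Lambda_n(ny)\to r(y)$ locally uniformly, with $r$ the unique solution of the Riccati integral equation \eqref{rc} for the identified continuous free term $B(y)$.

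It remains to identify $r(0)$ with the joint log-Laplace transform of the target limit vector $(a^{\gamma_j-1}\xi_{\gamma_j}(u_{ji}a^{-1}))_{j,i}$. The proof of Lemma \ref{lilla} extends essentially verbatim to a mixture of power functions $F_j(u)=u^{\gamma_j}$: the discrete recursion of Lemma \ref{love} only uses \eqref{cbp} applied to the one-dimensional accumulated increment at each time step, and this is insensitive to whether successive increments are drawn from one or several functions $F_j$. The resulting joint log-Laplace transform satisfies the same Riccati integral equation \eqref{rc}, and uniqueness of \eqref{rc} matches it with $r$. The main obstacle is the quadratic cross-term analysis in $B_n$: when the individual scores $\chi_j$ are mutually dependent, the terms $\rE(\chi_j\chi_{j'})$ could a priori contribute at scale $n^{-1}$ and distort the linear free term of the Riccati equation; the moment hypothesis \eqref{i} for each heavy-tailed coordinate, combined with Cauchy-Schwarz across coordinates, is precisely what rules this out.
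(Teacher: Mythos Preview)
Your high-level strategy---bundle everything into one composite score, invoke the branching renewal equation \eqref{NRE}, apply Proposition \ref{cth}, and match Riccati equations---is exactly the paper's approach. However, two essential technical ingredients are missing, and without them the argument does not close.

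First, the boundary term cannot be ``handled by Lemma \ref{rty}'' in one shot. To apply that lemma you must already know that $ne_1^{\Lambda_n(-ny)}$ converges uniformly in $y$ to some continuous limit; but $\Lambda_n(-t)$ for $t>0$ is still a full joint log-Laplace transform (since all your $u_{ji}$ are strictly positive, no coordinate drops out), so this is essentially the very convergence you are trying to prove. The paper breaks this circularity by an induction on the total number $p$ of time points: the distinct times are reindexed as $u_1>\cdots>u_p=0$, so that for $t>0$ the score $\psi_{n,p}(-t)$ loses the $u_p$-term (because $\chi(-t)=0$ by Definition \ref{def1}), whence $\Lambda_{n,p}(-t)=\Lambda'_{n,p-1}(nu_{p-1}-t)$ is a $(p-1)$-dimensional transform whose convergence is the inductive hypothesis \eqref{la'}. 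Your formulation evaluates at $y=0$ and presents a single application of Proposition \ref{cth}; without the inductive reduction, condition \eqref{btn} cannot be verified for the boundary piece of $B_n$.

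Second, your treatment of the quadratic cross-terms via Cauchy--Schwarz plus \eqref{i} does not cover coordinates with $m_{\chi_j}<\infty$: condition \eqref{i} is assumed \emph{only} when $m_{\chi_j}=\infty$, so $\sqrt{\rE(\chi_j^2(t))}$ is uncontrolled for the light-tailed scores, and the bound you claim is unavailable. The paper handles this by splitting $\psi_{n,p}=\psi'_{n,p}+\psi''_{n,p}$ into light- and heavy-tailed parts and treating the mixed term $e_1^{\psi'}e_1^{\psi''}$ via a truncation argument on the light-tailed factor (using $\sum_t\rE(\chi_j(t)1_{\{\chi_j(t)>k\}})\to0$ as $k\to\infty$ from $m_{\chi_j}<\infty$), not via second moments. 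Cauchy--Schwarz is used only within the heavy-tailed block, exactly as in the proof of \eqref{CS}.
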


To illustrate the utility of Theorem \ref{nrtm}, we consider a multitype GW-process 
$$\{(Z^1_t,Z^2_t,\ldots,Z^q_t), t\ge0\,|\,Z^1_0=n\},$$ 
where $Z^i_t$ is the number of type $i$-individuals born at time $t$, for $i=1,\ldots,q$. Each individual of type $i$ is assumed to live one unit of time and then be replaced by $N_{ij}$ individuals of type $j$. 
Denoting $m_{ij}:=\rE(N_{ij})$, assume that the multitype GW- process is decomposable in that 
\begin{align}
m_{ij}=0,\quad 1\le j<i\le q.\label{md}
\end{align}
The next result deals with a decomposable critical GW-process, satisfying
\begin{align}
m_{jj}=1,\quad 1\le j\le q,\qquad m_{j-1,j}\in(0,\infty),\quad 2\le j\le q,\qquad m_{ij}<\infty,\quad 1\le i \le j\le q. \label{mcri}
\end{align}

To put this process into the GWO-framework, we treat as GWO-individuals only the type 1 individuals, while the other types will be addressed by respective population counts. Clearly, the numbers of GWO-individuals forms a single type GW-process, and \eqref{cgw}, derived from Corollary 1, describes the limit behaviour of the scaled process $(Z^1_t, t\ge0|Z^1_0=n)$.
Since the process $\{Z^1_0,\ldots,Z^1_{n-1}\,|\,Z^1_0=n\}$ during $n$ units of time, produces type 2 individuals, of order $n$ new individuals per unit of time, one would expect, in view of Theorem \ref{nrtm}, a typical number of type 2 individuals at time $n$ to be of order $n^2$. An extrapolation of this reasoning suggests scaling by $n^j$ for the number of type $j$ individuals, $j=1,\ldots,q$.

\begin{theorem}\label{deco} 
Consider a decomposable multitype GW-process $(Z_t^1,Z^2_t,\ldots,Z^q_t)$ starting with $n$ individuals of type 1. Assume \eqref{md} and \eqref{mcri}.  
If furthermore,
$\rV(N_{jj})<\infty$, for all $1\le j\le q$, and $\rV(N_{11})=2b$, then  %the weak convergence in $D[0,\infty)$
  \begin{equation*}
   % \label{dgw}
\{(n^{-1}Z^1_{ny},n^{-2}Z^2_{ny},\ldots,n^{-q}Z^q_{ny}),\ y\ge0\,|\,Z^1_0=n\}\stackrel{\rm fdd\ }{\longrightarrow}\{(\xi(y),\alpha_1\xi_1(y),\ldots,\alpha_{q-1}\xi_{q-1}(y))\ y\ge0\,|\,\xi(0)=1\}
  \end{equation*}
as $n\to\infty$, with 
$\alpha_{j}:=\tfrac{1}{j!}m_{1,2}\cdots m_{j,j+1},\quad j=1,\ldots,q-1.$
\end{theorem}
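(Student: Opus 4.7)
The plan is to derive Theorem \ref{deco} as a direct consequence of Theorem \ref{nrtm}. First I would observe that, since every type-$1$ individual lives exactly one unit of time and reproduces according to $N:=N_{11}$, the process $\{Z_t^1\}$ fits the GWO-framework with $L\equiv 1$ and $\tau_1=\cdots=\tau_N=1$; in particular $a=1$, and the critical hypothesis \eqref{crit} reads $\rV(N)=2b$. It therefore suffices to express each $Z_t^j$ with $j\ge 2$ as a population count of this type-$1$ GWO-process in the sense of Definition \ref{def1}, and then verify the moment hypotheses of Theorem \ref{nrtm}.

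To this end, attach to a reference type-$1$ individual born at time $0$ the score
\[
\chi^{(j)}(u):=\#\bigl\{\text{type-}j\text{ individuals born at time }u\text{ whose most recent type-}1\text{ ancestor is the reference one}\bigr\},\quad j\ge 2,
\]
and set $\chi^{(1)}(u):=1_{\{u=0\}}$. Sorting every type-$j$ individual according to its most recent type-$1$ ancestor then gives $Z^j_t=\sum_{s=0}^t\sum_{k=1}^{Z^1_s}\chi^{(j)}_{sk}(t-s)$. By \eqref{md}, the exclusive sub-trees supporting the various $\chi^{(j)}_{sk}$ involve only types $\ge 2$, so by the branching property they are mutually independent across distinct type-$1$ progenitors, matching Definition \ref{def1}.

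To check \eqref{gammo}, I write $e_{ij}(v):=(M^v)_{ij}$ for the entries of the powers of the mean matrix $M=(m_{ij})$. Since $M$ is upper triangular with unit diagonal and \eqref{mcri} holds, a direct induction on $j-i$ (noting that the only strictly increasing path of length $j-i$ from $i$ to $j$ is the nearest-neighbour one) yields
\[
e_{ii}(v)\equiv 1,\qquad e_{ij}(v)\sim\frac{m_{i,i+1}m_{i+1,i+2}\cdots m_{j-1,j}}{(j-i)!}\,v^{j-i},\qquad 1\le i<j\le q,\ v\to\infty.
\]
Decomposing $\chi^{(j)}$ over the $N_{1i}$ children of type $i\ge 2$ of the reference individual gives $\rE(\chi^{(j)}(u))=\sum_{i=2}^q m_{1i}\,e_{ij}(u-1)$, whose leading term (from $i=2$) is $(j-1)\alpha_{j-1}\,u^{j-2}$. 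Summation yields $\sum_{s=0}^t\rE(\chi^{(j)}(s))\sim\alpha_{j-1}t^{j-1}$, i.e.\ \eqref{gammo} with $\gamma_j=j-1$ and $\mathcal{L}_j(t)\to\alpha_{j-1}$; for $j=1$ one trivially has $\gamma_1=0$ and $m_{\chi^{(1)}}=1$.

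The main technical obstacle will be the auxiliary condition \eqref{i}: I need to show $\rE(\chi^{(j)}(u)^2)=o(u^{2(j-1)})$ for every $j\ge 2$. Conditioning on the offspring vector of the reference type-$1$ expresses $\chi^{(j)}(u)$ as a sum of independent contributions $Y_{i,l}^{(j)}(u-1)$, where $Y_i^{(j)}(v)$ is the number of type-$j$ descendants at time $v$ of a single type-$i$ individual. A standard induction based on the second-moment recursion of a critical multitype branching process with finite offspring variances gives $\rV(Y_i^{(j)}(v))=O(v^{2(j-i)+1})$, which combined with the mean estimates above yields $\rE(\chi^{(j)}(u)^2)=O(u^{2j-3})=o(u^{2j-2})$. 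Once \eqref{gammo} and \eqref{i} are verified, Theorem \ref{nrtm} applied jointly to the scores $\chi^{(1)},\ldots,\chi^{(q)}$ delivers, with $a=1$,
\[
\Big(n^{-1}Z^1_{ny},\,n^{-2}Z^2_{ny},\ldots,n^{-q}Z^q_{ny}\Big)\overset{\mathrm{fdd}}{\longrightarrow}\bigl(\xi(y),\alpha_1\xi_1(y),\ldots,\alpha_{q-1}\xi_{q-1}(y)\bigr),
\]
which is the claim.
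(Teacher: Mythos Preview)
Your proposal is correct and follows essentially the same route as the paper: both treat the type-$1$ population as the underlying GWO (indeed GW) process with $a=1$, define the score $\chi^{(j)}$ as the number of type-$j$ descendants with no intermediate type-$1$ ancestor, verify \eqref{gammo} with $\gamma_j=j-1$ via the mean recursion (your $e_{ij}(v)$ is the paper's $M_{ij}(t)$), and then check \eqref{i} by proving the second-moment bound $\rV(Y_i^{(j)}(v))=O(v^{2(j-i)+1})$ (the paper's $V_{ij}(t)\le Ct^{2j-2i+1}$) by the same inductive second-moment recursion, obtaining $\rE(\chi^{(j)}(u)^2)=O(u^{2j-3})$. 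The only differences are cosmetic: you package the mean asymptotics via powers of the upper-triangular mean matrix, whereas the paper writes out the recursion explicitly.
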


Here the limiting process $\xi(\cdot)$ is the same as in \eqref{cgw} and
$\xi_j(y)=\int_0^y\xi(y-u)du^j$.
Notice that the only source of randomness in the $q$-dimensional limit process is due to the randomly fluctuating number of the first type of individuals. Observe also, that only the means $m_{j,j+1}$ appear in the limit, but not the other means like for example $m_{1,3}$. This fact reflects the following phenomenon of the reproduction system under consideration: in a large population,  the number of type 3 individuals stemming directly from type 1 individuals is much smaller compared to the number of type 3 individuals stemming from type 2 individuals.

\subsection{Proof of Theorem \ref{nrt}}

Assuming \eqref{ula}, put
\begin{equation}\label{Lnp}
\Lambda_{n,p}(t):=\ln  \rE_1\Big(\exp\Big\{-n^{-1}\sum_{i=1}^p\lambda_iX(nu_i+ t)\Big\}\Big).
\end{equation}
Due to \eqref{layla}, the Laplace transform of the $p$-dimensional distributions of the scaled $X(\cdot)$ are given by
\[\rE_n\Big(\exp\Big\{-n^{-1}\sum_{i=1}^p\lambda_iX(n(u_i+ y))\Big\}\Big)=e^{-n\Lambda_{n,p}(ny)},\quad y\ge0.\]
We prove Theorem \ref{nrt}  by  showing that 
\begin{equation}
  \label{lambdap}
 n\Lambda_{n,p}(ny)\stackrel{y}{\to}r_p(y),\quad n\to\infty,
\end{equation}
where the function
$r_p(y):=G_p(a^{-1}(\bar u+y),m_\chi\bar\lambda)$
determines the limiting fdd's of Theorem \ref{nrt} through Lemma \ref{lla}. Our proof of \eqref{lambdap} consists of several steps summarised in the next flow chart.
\begin{align}
\eqref{lambdap}\longleftarrow\eqref{nBn}\longleftarrow
\left.
\begin{array}{cc}
  \eqref{bound} & \longleftarrow\\
 \eqref{nBn1} &\\
\eqref{nBn2}  &  \longleftarrow
\end{array}
\right.
\left.
\begin{array}{l}
  \eqref{drug} \\
\\
  \eqref{astan}, \eqref{Nbn23}, \eqref{Nbn22} 
\end{array}
\right.
\label{kairat}
\end{align}

Due to Lemma \ref{lel}, we have
\begin{equation}
  \label{chuk}
\Lambda_{n,p}(t)=\Lambda^{[\psi_{n,p}]}(t),
\end{equation}
with
\begin{equation}
  \label{psina}
\psi_{n,p}(t)= n^{-1} \sum\nolimits_{i=1}^p\lambda_i\chi(nu_i+t).
\end{equation}
On the other hand, according to (\ref{Gp}), the limit function $r_p(\cdot)$ satisfies
\begin{equation}
  \label{Gpa}
 r_p(y)= r_{p-1}(0)+\lambda_p m_\chi-ba^{-1}\int_0^yr_p^2(v)dv.
\end{equation}
%where in accordance with our notational agreement,
%\begin{align*}
%r_{p-1}(0)&=G_{p-1}(a^{-1}\bar u,m_\chi\bar\lambda)=G_{p-1}(a^{-1}u_1,\ldots, a^{-1}u_{p-1}; m_\chi\lambda_1,,\ldots, m_\chi\lambda_{p-1}).
%\end{align*}
Thus, we can prove relation (\ref{lambdap}) using Proposition \ref{cth} 
%\[  e^{-\Lambda_{n,p}(t)}=\rE\Big(\exp\Big\{-n^{-1}\sum_{i=1}^p\lambda_iX(nu_i+ t)\Big\}\Big)\]
%
%\begin{equation}\label{psinp}
%  \Lambda_{n,p}(t)=
%\end{equation}
%with $\lambda_{n,i}=n^{-1}\lambda_i$. In terms of the Laplace transform
%$$e^{-n\Lambda_{n,p}(t)}=E\left(e^{-X^{(n)}_{\psi_{n,p}}(t)}\right)= \rE\left(\exp\left\{-n^{-1}\sum_{i=1}^p\lambda_iX^{(n)}(nu_i+ t)\right\}\right)$$ 
%fully characterizes the $p$-dimensional distributions of the process $\{n^{-1}X^{(n)}(ny),\ y>0\}$,
and induction over $p$ by verifying that
\begin{equation}
  \label{nBn}
 nB_n(ny)\stackrel{y}{\to}r_{p-1}(0)+\lambda_p m_\chi,
\end{equation}
where in accordance with \eqref{dy} and  \eqref{by},
\begin{equation}
  \label{bnt}
B_n(t)=e_2^{\Lambda_{n,p}(t)}+\sum\nolimits_{t=1}^\infty e_1^{\Lambda_{n,p}(-t)}R_{ny}(t)+D_n*U(t)
\end{equation}
and
\begin{equation}
  \label{dnt}
D_n(t)=\rE_1\Big(e_1^{\psi_{n,p}(t)}e^{-\sum_{j=1}^\infty\Lambda_{n,p}(t-j)\nu(j)}\Big).
\end{equation}

The initial induction step, with $p=0$, becomes trivial if we set $r_0(y):=0$ for all $y$. 
To state a relevant induction assumption, denote 
\begin{equation}\label{Lnp'}
\Lambda'_{n,p-1}(t):=\ln  \rE_1\Big(\exp\Big\{-n^{-1}\sum\nolimits_{i=1}^{p-1}\lambda_iX(nu'_i+ t)\Big\}\Big),
\end{equation}
where $u'_1>u'_2>\ldots>u'_{p-1}$ and $\lambda_1\ge0,\ldots, \lambda_{p-1}\ge0$.
Then, the inductive hypothesis claims
\begin{equation}
  \label{la'}
 n\Lambda'_{n,p-1}(ny)\stackrel{y}{\to}G_{p-1}(a^{-1}(\bar u'+y),m_\chi \bar\lambda),\quad n\to\infty.
\end{equation}
%stating (\ref{lambdap}) with $p-1$ instead of $p$ implies the following relation
%\begin{equation}
%  \label{lambdap-1}
% \sup_{0\le 2v\le u_{p-1}}\,\left|\,n\Lambda_{n,p-1}(-nv)-G_{p-1}(a^{-1}(\bar u-v),m_\chi \bar\lambda)\,\right|\,\to0,\ n\to\infty,
%\end{equation}
%which will be used soon in deducing  (\ref{lambdap}). 
% Stating the inductive hypothesis 
%\begin{equation}
%  \label{lambdap-1}
% n\Lambda'_{n,p-1}(ny)\stackrel{y\,}{\Rightarrow} r'_{p-1}(y),\quad n\to\infty,
% \end{equation}
%we may refer to a stronger form of uniform convergence since by \eqref{ula} 
%\[n(u_i+y)\stackrel{y\,}{\Rightarrow} \infty,\quad i=1,\ldots,p-1.\]
We establish the uniform convergence  (\ref{nBn}), under assumption   \eqref{la'}, in three steps
\begin{align}
ne_2^{\Lambda_{n,p}(ny)} \stackrel{y\,}{\Rightarrow}0,\quad n\to\infty,& \label{bound}\\
n\sum\nolimits_{t=1}^\infty e_1^{\Lambda_{n,p}(-t)}R_{ny}(t)\stackrel{y}{\to}r_{p-1}(0),\quad n\to\infty, &  \label{nBn1}\\
n \sum\nolimits_{t=1}^{ny}D_n(ny-t)U(t)\stackrel{y}{\to}\lambda_p m_\chi,\quad n\to\infty.  & \label{nBn2}
\end{align}

\begin{proof} of  \eqref{bound}.  
The upper bound
\[
e_1^{n\Lambda_{n,p}(t)}\le n\rE_1(X^{[\psi_{n,p}]}(t))=\sum\nolimits_{i=1}^p\lambda_{i}\rE_1(X(nu_i+t)),
\]
under the assumption $m_\chi<\infty$, implies
\begin{equation}
  \label{drug}
\sup_{n\ge1}\sup_{-\infty<t\le ny}n\Lambda_{n,p}(t)<\infty\text{ for any }y>0.
\end{equation}
This and a corollary of \eqref{e12},
$ne_2^{\Lambda_{n,p}(ny)}\le \tfrac{n}{2}\Lambda_{n,p}^2(ny)$,
 entail (\ref{bound}).
\end{proof}

\begin{proof} of  \eqref{nBn1}. Setting $u'_i:=u_i-u_{p-1}$,  recall  \eqref{Lnp'}. Notice that since $u_p=0$, we get for $t>0$,
\begin{align*}
 \Lambda_{n,p}(-t)&=\ln  \rE_1\Big(\exp\Big\{-n^{-1}\sum_{i=1}^{p-1}\lambda_iX(nu_i-t)\Big\}\Big)\\
 &=\ln  \rE_1\Big(\exp\Big\{-n^{-1}\sum_{i=1}^{p-1}\lambda_iX(n(u'_i+u_{p-1})-t)\Big\}\Big)=\Lambda'_{n,p-1}(nu_{p-1}-t).
\end{align*}
By the inductive assumption \eqref{la'}, the function
$$r^{(n)}(t):= ne_1^{n\Lambda'_{n,p-1}(nu_{p-1}-t)}1_{\{1\le t\le nu_{p-1}/2\}}$$
satisfies
  \[
r^{(n)}(ny)\stackrel{y\,}{\Rightarrow}r(y),\ n\to\infty,\quad r(y):=G_{p-1}(a^{-1}(\bar u-y),m_\chi \bar\lambda)1_{\{0\le y\le u_{p-1}/2\}}.
\]
Moreover, due to \eqref{drug}, we have 
$0\le r^{(n)}(t)\le C$ for all $n, t\ge1.$
Since $r(0)=r_{p-1}(0)$, relation (\ref{nBn1}) now follows from  Lemma \ref{rty}.
\end{proof}

\begin{proof} of  (\ref{nBn2}). In view of
\begin{align*}
 D_n(t)%&=\rE\Big(e_1^{\psi_{n,p}(t)}e^{-\sum_{j=1}^\infty\Lambda_{n,p}(t-j)\nu(j)})\Big)
 =\rE(\psi_{n,p}(t))-\rE(e_2^{\psi_{n,p}(t)})-\rE\Big(e_1^{\psi_{n,p}(t)}e_1^{\sum_{j=1}^\infty\Lambda_{n,p}(t-j)\nu(j)}\Big),
\end{align*}
relation (\ref{nBn2}) follows from \eqref{ert}  and the next three relations
 \begin{align}
n\sum\nolimits_{t=1}^{ny}\rE(\psi_{n,p}(ny-t))U(t)\stackrel{y}{\to} \lambda_p m_\chi,\quad n\to\infty,& \label{astan}\\
    n\sum\nolimits_{t=1}^{ny}\rE(e_2^{\psi_{n,p}(t)})\stackrel{y\,}{\Rightarrow}0,\quad n\to\infty,& \label{Nbn23}\\
     n \sum\nolimits_{t=1}^{ny}\rE\Big(e_1^{\psi_{n,p}(t)}e_1^{\sum_{j=1}^\infty\Lambda_{n,p}(t-j)\nu(j)}\Big)\stackrel{y\,}{\Rightarrow}0,\quad n\to\infty.& \label{Nbn22}
\end{align}

To prove of \eqref{astan}, notice, that \eqref{psina} gives 
$n\psi_{n,p}(t)=\lambda_p\chi(t)+n\psi_{n,p-1}(t)$.
Since
\[\sum\nolimits_{t=1}^{ny}\rE(\chi(ny-t))U(t)\stackrel{y}{\to} m_\chi,\quad n\to\infty,\]
it suffices to check that
 \begin{align}
  n\sum\nolimits_{t=1}^{ny}\rE(\psi_{n,p-1}(t))\stackrel{y\,}{\Rightarrow} 0,\quad n\to\infty. \label{nBn21}
\end{align}
This follows from the fact that for any positive $u$,
\[\sum\nolimits_{t=1}^{ny}\rE(\chi(nu+t))\le \sum\nolimits_{t>nu}\rE(\chi(t)),\]
with the right-hand side going to 0 as $n\to\infty$ under the assumption $m_\chi<\infty$.

Turning to  (\ref{Nbn23}), we split its left-hand side in three parts using (\ref{e1}), and then produce an upper bound as a sum of three terms involving an arbitrary $k\ge 1$:
\begin{eqnarray*}
 ne_2^{\psi_{n,p}(t)} &=& ne_2^{n^{-1}\lambda_{p}\chi(t)}+ne_2^{\psi_{n,p-1}(t)}+ne_1^{n^{-1}\lambda_{p}\chi(t)}e_1^{\psi_{n,p-1}(t)} \\
  &\le& n^{-1}\lambda_{p}^2 \chi^2(t) 1_{\{\chi(t)\le k\}}+\lambda_{p}\chi(t)1_{\{\chi(t)> k\}}+2n\psi_{n,p-1}(t).
\end{eqnarray*}
 The third term is handled by (\ref{nBn21}). The first term is further estimated from above by 
 $$n^{-1} \sum\nolimits_{t=1}^{ny}\rE(\chi^2(t) 1_{\{\chi(t)\le k\}}) \le n^{-1}k \sum\nolimits_{t=1}^{\infty}\rE(\chi(t)),$$
where the right-hand side converges to zero for any fixed $k$. Finally, in view of
 $$\sum\nolimits_{t=1}^{ny}\rE(\chi(t) 1_{\{\chi(t)> k\}}) \le \sum\nolimits_{t=1}^{\infty}\rE(\chi(t) 1_{\{\chi(t)> k\}}),$$
the proof of  (\ref{Nbn23}) is finished by applying Fatou's lemma as $k\to\infty$. 

To prove convergence (\ref{Nbn22}), we use the bound 
$$e_1^{\psi_{n,p}(t)}\le n^{-1} \lambda_{p}\chi(t)+\psi_{n,p-1}(t),$$ 
and referring to  (\ref{nBn21}), reduce the task to
\begin{equation*}%\label{chiX}
  \sum\nolimits_{t=1}^{ny}\rE\Big(\chi(t)e_1^{\sum_{j=1}^\infty\Lambda_{n,p}(t-j)\nu(j)}\Big)\stackrel{y\,}{\Rightarrow}0,\quad n\to\infty.
\end{equation*}
The last relation follows from the upper bound
\begin{align*}
 \sum_{t=1}^{\infty}\rE\Big(\chi(t)e_1^{\sum_{j=1}^\infty\Lambda_{n,p}(t-j)\nu(j)}\Big)&\le  \sum_{t=k}^{\infty}\rE(\chi(t))+\sum_{t=1}^{k}\rE(\chi(t)1_{\{\chi(t)>k_1\}}) +k_1\sum_{t=1}^{k}\sum_{j=1}^\infty\Lambda_{n,p}(t-j)A(j)
 %\\&\le\sum_{t=k}^{\infty}m(t)+\sum_{t=1}^{\infty}E\left(\chi(t)1_{\{\chi(t)>k_1\}}\right)+Ckk_1\sup_{0\le t\le k}E\left(X_{\psi_{n,p}}(t)\right)
\end{align*}  
because the third term tends to 0 as $n\to\infty$, thanks to (\ref{drug}),  and the first two terms in the right-hand side vanish as $k\to\infty$ and $k_1\to\infty$ due to the assumption $m_\chi<\infty$.
\end{proof}

\subsection{Proof of Theorem \ref{nrtg}}
The main idea of the proof of Theorem \ref{nrtg} is the same as of Theorem \ref{nrt}, and here we mainly focus on the new argument addressing the case $m_\chi=\infty$. We want  to prove \eqref{lambdap} with the modified annotations
\begin{align*}%\label{Lnp}
\Lambda_{n,p}(t)&:=\ln  \rE_1\Big(\exp\Big\{-n^{-1-\gamma}\mathcal{L}^{-1}(n)\sum_{i=1}^p\lambda_iX(nu_i+ t)\Big\}\Big),\\
r_p(y)&:=H_p(a^{-1}(\bar u+y),a^{\gamma-1}\bar \lambda),
\end{align*}
where $H_p(\bar u,\bar \lambda)$ is defined by  \eqref{dHp}, with $F(y):=y^\gamma$.
In this case, relation \eqref{chuk} holds with
\[\psi_{n,p}(t):=\sum\nolimits_{i=1}^p\lambda_{n,i}\chi(nu_i+t),\quad \lambda_{n,i}:=\lambda_i\, n^{-1-\gamma}\mathcal{L}^{-1}(n),\]
and according to \eqref{Hp}, the right-hand side of \eqref{lambdap} satisfies
\[ r_p(y)= r_{p-1}(0)+a^{-1}\sum_{i=1}^p\lambda_i\left((u_i+y)^\gamma-u_i^\gamma\right)-ba^{-1}\int_0^yr_p^2(v)dv.\]

Thus, under the conditions of Theorem \ref{nrtg}, relation \eqref{lambdap} will follow from Proposition \ref{cth} after we show 
\[nB_n(ny)\stackrel{y}{\to}r_{p-1}(0)+a^{-1}\sum\nolimits_{i=1}^{p}\lambda_i\left((u_i+y)^\gamma+u_i^\gamma\right),\quad n\to\infty,\]
where $B_n(t)$ is defined by \eqref{bnt} and \eqref{dnt}. Its counterpart \eqref{nBn} was proven in the case $m_\chi<\infty$ according to flow chart \eqref{kairat}. In the rest of the proof, we follow the same flow chart and comment on necessary changes in the case $m_\chi=\infty$.

 The counterparts of \eqref{bound}. \eqref{drug}, and \eqref{nBn1} in  the case $m_\chi=\infty$, are verified in a similar way as   in  the case $m_\chi<\infty$, now using Proposition \ref{rt}. 
 The counterpart of \eqref{nBn2} takes the form
\begin{equation}
  \label{Nbn2}
n\sum\nolimits_{t=1}^{ny}D_n(ny-t)U(t)\stackrel{y}{\to}a^{-1}\sum\nolimits_{i=1}^{p}\lambda_i\left((u_i+y)^\gamma-u_i^\gamma\right),\quad n\to\infty,
\end{equation}
as Proposition \ref{rt} yields the following counterpart of  \eqref{astan}
 \begin{align*}
n\sum\nolimits_{t=1}^{ny} \rE(\psi_{n,p}(ny-t))U(t)&\stackrel{y}{\to} a^{-1}\sum\nolimits_{i=1}^{p}\lambda_i\left((u_i+y)^\gamma-u_i^\gamma\right),\quad n\to\infty.  %\label{Nbn21}
\end{align*}

To verify \eqref{Nbn23} in the case $m_\chi=\infty$, we check that
\begin{equation}
  \label{CS}
n\sum\nolimits_{t=1}^{ny}\rE(\psi_{n,p}^2(t))\stackrel{y\,}{\Rightarrow}0,\quad n\to\infty,
\end{equation}
by putting to use condition \eqref{i} to handle the terms
$$ n\sum_{i=1}^p\sum_{t=1}^{ny}\lambda_{n,i}^2\rE(\chi^2(nu_i+t))+2n\sum_{i=1}^p\sum_{j=i+1}^p\sum_{t=1}^{ny}\lambda_{n,i}\lambda_{n,j}\rE(\chi(nu_i+t)\chi(nu_j+t)),$$ 
after applying the Cauchy-Schwartz inequality for expectations
$$\rE(\chi(nu_i+t)\chi(nu_j+t))\le \sqrt{\rE(\chi^2(nu_i+t))} \sqrt{\rE(\chi^2(nu_i+t))}.$$ 

To prove the counterpart of \eqref{Nbn22} in the case $m_\chi=\infty$, we use a sequence of upper bounds
\begin{align*} 
n \sum_{t=1}^{ny}\rE\Big(e_1^{\psi_{n,p}(t)}&e_1^{\sum_{j=1}^\infty\Lambda_{n,p}(t-j)\nu(j)}\Big)\\
&\le n \sum_{t=1}^{ny}\rE(\psi_{n,p}(t)1_{\{N> n\epsilon\}})
+n\sum_{t=1}^{ny}\rE\Big(\psi_{n,p}(t)\sum_{j=1}^\infty\Lambda_{n,p}(t-j)\nu(j)1_{\{N\le n\epsilon\}}\Big)\\
&\le n \sum_{t=1}^{ny}\sqrt{\rE(\psi^2_{n,p}(t))}\sqrt{\rP(N> n\epsilon)}
+\sup_{t\le ny}(n\Lambda_{n,p}(t))\sum_{t=1}^{ny}\rE(\psi_{n,p}(t)N1_{\{N\le n\epsilon\}})\\
&\le C_1 \epsilon^{-1}\sum_{t=1}^{ny}\sqrt{\rE(\psi^2_{n,p}(t))}
+C_2n\epsilon\sum_{t=1}^{ny}\rE(\psi_{n,p}(t)),
\end{align*}  
where we applied the Cauchy-Schwartz and Markov inequalities together with  \eqref{drug}. By the Cauchy-Schwartz inequality  for the dot product
\[\Big(\sum_{t=1}^{ny}1\cdot \sqrt{\rE(\psi^2_{n,p}(t))}\ \Big)^2\le ny\sum_{t=1}^{ny}\rE(\psi^2_{n,p}(t)), %=n^{-1-\gamma}\mathcal{L}^{-1}(n)\sum_{t=1}^{ny}\sqrt{\rE\Big(\Big(\sum_{i=1}^p\lambda_{i}\chi(nu_i+t)\Big)^2\Big)}\stackrel{y\,}{\Rightarrow}0,\quad n\to\infty.
\]
which together with \eqref{CS} yield
$\sum_{t=1}^{ny}\sqrt{\rE(\psi^2_{n,p}(t))}\stackrel{y\,}{\Rightarrow}0$ as $n\to\infty$.
On the other hand, in view of Proposition \ref{rt}, the upper bound
\[n\epsilon\sum\nolimits_{t=1}^{ny}\rE(\psi_{n,p}(t))<\epsilon C(y_1),\quad n\ge 1, \quad 0\le y\le y_1\]
holds for an arbitrary $\epsilon>0$.
Sending $\epsilon\to0$ ends the proof of \eqref{Nbn22} and thereby of Theorem \ref{nrtg}.
%\end{proof}

 \subsection{Proof of Theorem \ref{nrtm}}\label{sec:lt3}
 
\begin{lemma}
Put
\[H_{p,q}(\bar u,\bar\lambda)
:=-\ln\rE_1\Big(\exp\Big\{-\sum_{i=1}^p\sum_{j=1}^q\lambda_{ij}\xi_{\gamma_j}(u_i)\Big\}\Big),\]
assuming
\begin{equation}\label{gj}
    0=\gamma_1=\ldots=\gamma_s<\gamma_{s+1}\le\ldots\le\gamma_q,\quad 0\le s\le q.
\end{equation}
Then for $u_1>\ldots>u_p=0$, the following integral equation holds
\begin{align}
  \label{Hpq}
H_{p,q}(\bar u+y,\bar\lambda)&= H_{p-1,q}(\bar u,\bar\lambda)+F_{p,q}(y)-b\int_0^yH_{p,q}^2(\bar u+v,\bar\lambda)dv,\\
F_{p,q}(y)&:=\lambda_{p1}+\ldots+\lambda_{ps}+ \sum_{i=1}^p\sum_{j=s+1}^q\lambda_{ij}\left((u_i+y)^{\gamma_j}-u_i^{\gamma_j}\right).\nonumber
\end{align}
\end{lemma}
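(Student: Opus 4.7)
The plan is to imitate the proof of Lemma \ref{lilla}, splitting the random exponent into a piece that depends on $\xi$ over $[y,\infty)$ and a piece that depends on $\xi$ over $[0,y]$, applying the branching Markov property of $\xi$ to collapse the former into a function of $\xi(y)$, and finally invoking Proposition \ref{inteq} with a suitable jump-plus-continuous integrator $F_p$ to obtain the Riccati equation \eqref{Hpq}. The only genuinely new ingredient compared with Lemma \ref{lilla} is the simultaneous presence of components with $\gamma_j=0$ (where $\xi_{\gamma_j}=\xi$ is evaluated at a single time) and components with $\gamma_j>0$ (where $\xi_{\gamma_j}$ is a Riemann--Stieltjes integral); the ordering \eqref{gj} lets one track the two families separately.

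The first step would be to carry out the splitting term by term. For $j\le s$ one already has $\xi_{\gamma_j}(u_i+y)=\xi(u_i+y)$, a functional of $\xi$ on $[y,\infty)$. For $j>s$, I would split $\int_0^{u_i+y}=\int_0^{u_i}+\int_{u_i}^{u_i+y}$ and change variables $w=v-u_i$ in the second piece to write
\[
\xi_{\gamma_j}(u_i+y)=\int_0^{u_i}\xi(u_i+y-v)\,dv^{\gamma_j}+\int_0^{y}\xi(y-w)\,d(u_i+w)^{\gamma_j}.
\]
Setting $\eta(t):=\xi(y+t)$, the first summand equals $\eta_{\gamma_j}(u_i)$ and vanishes when $i=p$ thanks to $u_p=0$ and $\gamma_j>0$. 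Assembling everything produces the decomposition
\[
\sum_{i=1}^p\sum_{j=1}^q\lambda_{ij}\xi_{\gamma_j}(u_i+y)=\sum_{i=1}^{p-1}\sum_{j=1}^q\lambda_{ij}\eta_{\gamma_j}(u_i)+\Bigl(\sum_{j=1}^{s}\lambda_{pj}\Bigr)\xi(y)+\int_0^y\xi(y-w)\,dC(w),
\]
with $C(w):=\sum_{i=1}^p\sum_{j>s}\lambda_{ij}\bigl[(u_i+w)^{\gamma_j}-u_i^{\gamma_j}\bigr]$.

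The next step would be to condition on $\xi(y)$: by the branching Markov property of $\xi$ (Lemma \ref{lla}) combined with the identity $\rE_x\exp\{-\sum_{i,j}\lambda_{ij}\xi_{\gamma_j}(u_i)\}=e^{-xH_{p-1,q}(\bar u,\bar\lambda)}$ applied with $x=\xi(y)$ and with $i$ ranging only up to $p-1$, the $\eta$-contribution integrates out to $e^{-\xi(y)H_{p-1,q}(\bar u,\bar\lambda)}$, leaving
\[
e^{-H_{p,q}(\bar u+y,\bar\lambda)}=\rE_1\Bigl(\exp\Bigl\{-\xi(y)\bigl[H_{p-1,q}(\bar u,\bar\lambda)+\textstyle\sum_{j\le s}\lambda_{pj}\bigr]-\int_0^y\xi(y-w)\,dC(w)\Bigr\}\Bigr).
\]
Since the bracketed coefficient of $\xi(y)$ together with $C(\cdot)$ is exactly the integrator $F_p(w):=H_{p-1,q}(\bar u,\bar\lambda)+F_{p,q}(w)$, this is $\rE_1(e^{-\xi\circ F_p(y)})$, and Proposition \ref{inteq} applied to $F_p$ delivers \eqref{Hpq}. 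The main obstacle will be justifying rigorously the branching identity $\rE_x e^{-\sum\lambda_{ij}\xi_{\gamma_j}(u_i)}=e^{-xH_{p-1,q}(\bar u,\bar\lambda)}$ for an arbitrary non-negative random $x=\xi(y)$, because $\xi_{\gamma_j}$ is defined indirectly through $\xi\circ F$ rather than by an explicit transition law. I would address this either by observing that $\xi_{\gamma_j}$ is a continuous linear functional of $\xi$, so the additivity in the initial value already encoded in \eqref{cbp} and Lemma \ref{lla} is inherited, or by first establishing the analogue of \eqref{Hpq} in the discretised setting of Lemma \ref{love} and then passing to the limit $n\to\infty$ in the manner of Proposition \ref{inteq}.
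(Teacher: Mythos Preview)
Your proposal is correct and follows precisely the approach the paper intends: the paper's proof consists of the single sentence ``The lemma is proven similarly to Lemma \ref{lilla},'' and your splitting-plus-Markov-plus-Proposition~\ref{inteq} argument is exactly the adaptation of that lemma's proof to the multi-$\gamma$ setting. Your closing remark about handling the branching identity via the discretisation of Lemma~\ref{love} and Proposition~\ref{inteq} is the right way to make the argument rigorous, and mirrors what the paper does for Lemma~\ref{lilla} itself.
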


\begin{proof} The lemma is proven similarly to Lemma \ref{lilla}. 
\end{proof}
Theorem \ref{nrtm} is obtained by combining the proofs of Theorems \ref{nrt} and \ref{nrtg}. The aim is to prove \eqref{lambdap} with
\begin{align*}
 \psi_{n,p}(t)&:=\sum_{i=1}^p\sum_{j=1}^q\lambda_{n,ij}\chi_j(nu_i+t),\ \lambda_{n,ij}:=\lambda_{ij}n^{-1-\gamma_j}\mathcal{L}_j^{-1}(n),\\
 r_p(y)&:=H_{p,q}(a^{-1}(\bar u+y),a^{\gamma_1-1}\bar \lambda_1,\ldots,a^{\gamma_q-1}\bar \lambda_q),
\end{align*}
assuming $u_1>\ldots>u_{p-1}>u_p=0$ and $\lambda_{ij}\ge0$.
Without loss of generality we assume \eqref{gj} and that for some $0\le s'\le s$,
$$m_{\chi_j}<\infty,\quad j=1,\ldots,s',\qquad m_{\chi_j}=\infty,\quad j= s'+1,\ldots, q.$$ 

According to (\ref{Hpq}), the limit function in \eqref{lambdap} satisfies the integral equation
\[ r_p(y)= r_{p-1}(0)+a^{-1}F_{p,q}(y)-ba^{-1}\int_0^yr_p^2(v)dv.\]
Therefore, to apply Proposition \ref{cth}, we have to prove for the updated version of \eqref{bnt}, that
\[nB_n(ny)\stackrel{y}{\to}r_{p-1}(0)+a^{-1}F_{p,q}(y), \quad n\to\infty,\]
which once again, is done  according to flow chart \eqref{kairat}.
Even in this more general setting, the counterparts of \eqref{bound} and \eqref{nBn1} are valid, and the task boils down to verifying the counterpart of \eqref{nBn2} 
\[
 n \sum\nolimits_{t=1}^{ny}D_n(ny-t)U(t)\stackrel{y}{\to}a^{-1}F_{p,q}(y), \quad n\to\infty,
\]
where  the limit is obtained using Proposition \ref{rt} for the counterpart of \eqref{astan}
 \[n\sum\nolimits_{t=1}^{ny} \rE(\psi_{n,p}(ny-t))U(t)\stackrel{y}{\to} a^{-1}F_{p,q}(y), \quad n\to\infty.\]
 
It remains to verify the counterparts of \eqref{Nbn23}, (\ref{Nbn22}).

\begin{proof} {\sc of} (\ref{Nbn23}).  Observe that 
$\psi_{n,p}(t)=\psi'_{n,p}(t)+\psi''_{n,p}(t)$, where
\[\psi'_{n,p}(t):=\sum_{j=1}^{s'}\sum_{i=1}^p\lambda_{n,ij}\chi_j(nu_i+t),\quad \psi''_{n,p}(t):=\sum_{j=s'+1}^q\sum_{i=1}^p\lambda_{n,ij}\chi_j(nu_i+t).\]
Using (\ref{e1}), 
we can split the left-hand side of (\ref{Nbn23}) into the sum of three terms 
\begin{eqnarray*}
   n\sum_{t=1}^{ny}\rE(e_2^{\psi_{n,p}(t)})= n\sum_{t=1}^{ny}\rE(e_2^{\psi'_{n,p}(t)})+  n\sum_{t=1}^{ny}\rE(e_2^{\psi''_{n,p}(t)})+ n\sum_{t=1}^{ny}\rE(e_1^{\psi'_{n,p}(t)} e_1^{\psi''_{n,p}(t)}).
\end{eqnarray*}
The first and the second terms  are handled using the argument of the proofs of Theorems \ref{nrt} and \ref{nrtg} respectively. 

The third term requires a special attention. It is estimated from above by
\begin{align*}
 n\sum_{t=1}^{ny}\rE(e_1^{\psi'_{n,p}(t)} e_1^{\psi''_{n,p}(t)})
&\le  n\sum_{t=1}^{ny}\rE \Big(\Big(e_1^{\sum_{j=1}^{s'}\lambda_{n,pj}\chi_j(t)}+ e_1^{\psi'_{n,p-1}(t)}\Big)e_1^{\psi''_{n,p}(t)}\Big)\\
&\le  C\sum_{j=1}^{s'}\sum_{t=1}^{ny}\rE \Big(\chi_j(t) e_1^{\psi''_{n,p}(t)}\Big)+n\sum_{t=1}^{ny}\rE (\psi'_{n,p-1}(t)).
\end{align*}
The last term is tackled in a similar way as \eqref{nBn21}, and it remains to show that for each $j\le s'$,
\[\sum_{t=1}^{ny}\rE \Big(\chi_j(t) e_1^{\psi''_{n,p}(t)}\Big)\stackrel{y\,}{\Rightarrow}0, \quad n\to\infty.\]
To this end, observe that for an arbitrary $k\ge1$,
\[\sum_{t=1}^{ny}\rE \Big(\chi_j(t) e_1^{\psi''_{n,p}(t)}\Big)\le k\sum_{t=1}^{ny}\rE (\psi''_{n,p}(t))+\sum_{t=1}^{\infty}\rE (\chi_j(t) 1_{\{\chi_j(t)>k \}}).\]
The first term is taken care by \eqref{nBn21}, while the second term vanishes as $k\to\infty$ since $m_{\chi_j}<\infty$.
\end{proof}

\begin{proof} {\sc of}  \eqref{Nbn22}. 
Using $\psi_{n,p}(t)=\psi'_{n,p}(t)+\psi''_{n,p}(t)$, we get
$e_1^{\psi_{n,p}(t)}\le e_1^{\psi'_{n,p}(t)}+e_1^{\psi''_{n,p}(t)}$,
which allows us to replace \eqref{Nbn22} by the following two relations
\begin{align*}
  n \sum_{t=1}^{ny}\rE\Big(e_1^{\psi'_{n,p}(t)}e_1^{\sum_{j=1}^\infty\Lambda_{n,p}(t-j)\nu(j)}\Big)&\stackrel{y\,}{\Rightarrow}0,\quad n\to\infty,\\
   n \sum_{t=1}^{ny}\rE\Big(e_1^{\psi''_{n,p}(t)}e_1^{\sum_{j=1}^\infty\Lambda_{n,p}(t-j)\nu(j)}\Big)&\stackrel{y\,}{\Rightarrow}0,\quad n\to\infty.
\end{align*}
The first relation is proven in the same way as \eqref{Nbn22} was proven for Theorem \ref{nrt}, and the second relation is proven in the same way as \eqref{Nbn22} was proven for Theorem \ref{nrtg}.
\end{proof}

 \subsection{Proof of Theorem \ref{deco}}\label{seco}
Adapting the setting of Theorem \ref{deco} to Theorem \ref{nrtm}, we treat the process $(Z_{t}^1,\ldots,Z_{t}^q)$ as a vector of population counts for a single type GW-process. This is achieved by focussing on the type 1 individuals and introducing $q$ individual scores for a generic individual of type 1 born at time 0 by setting 
\begin{description}
\item[ ] $\chi_1(t):=1_{\{t=0\}}$,
\item[ ] $\chi_j(t):=$ the number of descendants of the generic individual, which (a) have no other intermediate ancestors of type 1, (b) are born at time $t$,  and (c) have type $i$,  
\end{description}
for $j=2,\ldots,q$. Having this, our task  is to check that conditions \eqref{gammo} and (\ref{i}) hold  with 
$\gamma:=j-1$ and $\mathcal{L}(t):=\tfrac{1}{(j-1)!} m_{1,2}\cdots m_{j-1,j}$ for all $t\ge1$ and $j=2,\ldots,q$.

To check condition \eqref{gammo} with  $\chi(\cdot):=\chi_j(\cdot)$ for a given $j=2,\ldots,q$, we use a representation
\begin{equation}\label{aux}
   \chi_j(t+1)=\sum_{i=2}^j \sum_{k=1}^{N_{1i}}Z^{i}_t(j,k),
\end{equation}
where $Z^{i}_t(j,k)\stackrel{\rm d}{=}Z^{i}_t(j)$ stands for the number of type $j$ individuals born at time $t+1$ and descending from a  type $i$ individual born at time 1. 
This gives
\[   \rE(\chi_j(t+1))=\sum\nolimits_{i=2}^j m_{1j}M_{ij}(t),\quad M_{ij}(t):=\rE(Z^{i}_t(j)|Z^i_0=1).
\]
Furthermore, due to the decomposable  branching property, we have
\begin{equation}\label{rau}
Z^{i}_{t+1}(j)=\sum\nolimits_{l=i}^j \sum\nolimits_{k=1}^{N_{il}}Z^{l}_t(j,k),
\end{equation}
implying the recursion
\[M_{ij}(t+1)=\sum\nolimits_{l=i}^j m_{il}M_{lj}(t)=M_{ij}(t)+\sum\nolimits_{l=i+1}^{j-1}m_{il}M_{lj}(t)+m_{ij}.\]
Putting here $j=i+1$, we get $M_{i,i+1}(t)=t m_{i,i+1}$. From 
\[M_{i,i+2}(t+1)=M_{i,i+2}(t)+m_{i,i+1}M_{i+1,i+2}(t)+m_{i,i+2}=M_{i,i+2}(t)+t m_{i,i+1} m_{i+1,i+2}+m_{i,i+2},\]
we find
$M_{i,i+2}(t)\sim \tfrac{1}{2}t^2 m_{i,i+1} m_{i+1,i+2}$ as $t\to\infty.$
Thus, by iteration, we derive
$$M_{ij}(t)\sim \tfrac{1}{(i-j)!}t^{j-i} m_{i,i+1}\cdots m_{j-1,j},\quad t\to\infty,$$
which allows us to conclude that condition \eqref{gammo} holds in the desired form, because
\[   \rE(\chi_j(t))\sim \tfrac{1}{(j-2)!}t^{j-2} m_{1,2}\cdots m_{j-1,j},\quad t\to\infty.
\]

Finally, to verify condition  (\ref{i})  with $\gamma=j-1$, it suffices to show that
\begin{equation}\label{final}
 \rE(\chi_j^2(t))\le Ct^{2j-3},\quad j=2,\ldots,q,
\end{equation}
using the following corollary of (\ref{aux})
\begin{align*}
 \rE(\chi_j^2(t+1))&=\rE\Big(\Big(\sum_{i=2}^j \sum_{k=1}^{N_{1i}}Z^{i}_t(j,k)\Big)^2\Big)\\
&=\sum_{i=2}^j m_{1i}V_{ij}(t)+2\sum_{i=2}^j \rE(N_{1i}(N_{1i}-1))M^2_{ij}(t)+2\sum_{2\le i<l\le j}\rE(N_{1i}N_{1l})M_{ij}(t)M_{lj}(t),
\end{align*}
where $V_{ij}(t):=\rE((Z^{i}_t(j))^2|Z^i_0=1)$.  From here, relation \eqref{final} is obtained from
\[\sum_{i=2}^j \rE(N_{1i}(N_{1i}-1))M^2_{ij}(t)+\sum_{2\le i<l\le j}\rE(N_{1i}N_{1l})M_{ij}(t)M_{lj}(t))\le C_1\sum_{2\le i\le l\le j} t^{j-i}t^{j-l}\le C_2t^{2j-4}\]
and the upper bound $V_{ij}(t)\le Ct^{2j-2i+1}$, derived next. 
Using \eqref{rau} and applying similar estimates, we find
\[V_{ij}(t+1)=\rE\Big(\Big(\sum_{l=i}^j \sum_{k=1}^{N_{il}}Z^{l}_t(j,k)\Big)^2\Big)\le\sum_{l=i}^j m_{il}V_{lj}(t)+Ct^{2j-2i}.\]
%\[\sum_{l=j+1}^i \sum_{k=j}^{l-1}\rE(N_{kl}N_{jl})M_{ki}(t)M_{li}(t)\le Ct^{2i-2j-1},\]
In particular, $V_{jj}(t+1)\le V_{jj}(t)+C$ 
implies $V_{jj}(t)\le Ct$. This in turn, gives
\[V_{j-1,j}(t+1)\le V_{j-1,j}(t)+C_1t+C_2t^2,\]
and $V_{j-1,j}(t)\le Ct^3$. Reiterating this argument, we find $V_{ij}(t)\le Ct^{2j-2i+1}$, which ends the proof of \eqref{final} and  Theorem  \ref{deco} as a whole.


\begin{thebibliography}{99pekin}\label{ref}
\bibitem{AN} Athrea, K. and Ney, P. Branching processes. John Wiley \& Sons, London-New York-Sydney, 1972.
%\bibitem{BGT} Bingham, N.H.; Goldie, C.M and Teugels, J.L. Regular variation. Cambridge university press, Cambridge, 1987.
%\bibitem{BK}  Bose, A. and Kaj, I. A scaling limit process for the age-reproduction structure in a Markov population.  Markov Process. Related Fields  6  (2000),  no. 3, 397--428.
%\bibitem{F51} Feller, W. Diffusion processes in genetics.  Proceedings of the Second Berkeley Symposium on Mathematical Statistics and Probability, 1950,  pp. 227--246. University of California Press, Berkeley and Los Angeles, 1951
\bibitem{F1} Feller, W. An introduction to probability theory and its applications. Vol. I. John Wiley \& Sons, New York-London-Sydney 1968
\bibitem{F2} Feller, W. An introduction to probability theory and its applications. Vol. II. John Wiley \& Sons, New York-London-Sydney 1971
%\bibitem{FS} Fleischmann, K. and Siegmund-Schultze, R. The structure of reduced critical Galton-Watson processes.  Math. Nachr.  79  (1977), 233--241
\bibitem{FN} Foster, J. and Ney, P. Limit laws for decomposable critical branching processes. Z. Wahrscheinlichkeitstheorie verw Gebiete 46  (1978) 13--43. 
%\bibitem{G} Goldstein, M. Critical age-dependent branching processes: Single and multitype.  Z. Wahrscheinlichkeitstheorie und Verw. Gebiete,  17  (1971) 74--88.
 \bibitem{G} Green, P. J. Conditional limit theorems for general branching processes.  J. Appl. Probability  14  (1977), no. 3, 451--463
\bibitem{HJV} Haccou, P., Jagers, P., and Vatutin, V.A. Branching processes: variation, growth, and extinction of populations. Cambridge university press, Cambridge, 2005
%\bibitem{H1} Holte, J.M. Extinction probability for a critical general branching process.  Stoch. Processes Appl. 2 (1974) 303-309.
\bibitem{H2} Holte, J.M. A generalization of Goldstein's comparison lemma and the exponential limit law in critical Crump-Mode-Jagers branching processes.  Adv. Appl. Prob. 8 (1976) 88--104.
%\bibitem{J69} Jagers, P. A general stochastic model for population development.  Skand. Aktuarietidskr.  1969, 84--103.
%\bibitem{J73} Jagers, P. On the asymptotics of general branching processes in the critical case. Intern. Conf. on Probability Theory and Mathematical Statistics. Vilnius, Abstracts of Communications 2 (1973) 361--364.
\bibitem{J74} Jagers, P. Convergence of general branching processes and functionals thereof.  J. Appl. Probability  11  (1974), 471--478.
\bibitem{J} Jagers, P. Branching processes with biological applications. John Wiley \& Sons, London-New York-Sydney, 1975.
%\bibitem{J82} Jagers, P. How probable is it to be first born? and other branching-process applications to kinship problems.  Math. Biosci.  59  (1982), no. 1, 1--15.
%\bibitem{J89} Jagers, P. General branching processes as Markov fields.  Stochastic Process. Appl.  32  (1989),  no. 2, 183--212.
%\bibitem{JN} Jagers, P. and Nerman, O. The growth and composition of branching populations.  Adv. in Appl. Probab.  16  (1984),  no. 2, 221--259
%\bibitem{JS} Jagers, P. and Sagitov, S.  The growth of general population-size-dependent branching processes year by year.  J. Appl. Probab.  37  (2000),  no. 1, 1--14.
\bibitem{JS1} Jagers, P. and Sagitov, S.  General branching processes in discrete time as random trees. Bernoulli 14 (2008) 949--962.
%\bibitem{1997} Kaj, I. and Sagitov, S. Superprocess approximation for a spatially homogeneous branching walk.  Electron. Comm. Probab.  2  (1997), 59--70 (electronic).
%\bibitem{1998} Kaj, I. and Sagitov, S. Limit processes for age-dependent branching particle systems.  J. Theoret. Probab.  11  (1998) 225--257.
%\bibitem{KW} Kawazu, K. and Watanabe, S. Branching processes with immigration and related limit theorems.  Theory Probab. Appl. 16 (1971) 36--54.
\bibitem{Kim} Kimmel, M. and Axelrod, D.E. Branching Processes in Biology, Springer-Verlag, New York, 2002.
\bibitem{L} Lamperti, J. The limit of a sequence of branching processes, Z. Wahrsch. Verw. Gebiete, 7 (1967), 271--288.
\bibitem{MM} M\"{o}hle, M. and Vetter B. Asymptotics of Continuous-Time Discrete State Space Branching Processes for Large Initial State. Markov Processes Relat. Fields 27 (2021) 1--42 
%\bibitem{N81} Nerman, O. On the convergence of supercritical general (C-M-J) branching processes.  Z. Wahrsch. Verw. Gebiete  57  (1981) 365--395. 
%\bibitem{N83} Nerman, O. A new proof of some asymptotical results for critical Crump-Mode-Jagers branching processes.  Preprint: Dept. Math., Chalmers University of Technology, 1983, 1--10.
%\bibitem{N84} Nerman, O. The stable pedigrees of critical branching populations.  J. Appl. Probab.  21  (1984) 447--463.
%\bibitem{NJ} Nerman, O. and Jagers, P. The stable double infinite pedigree process of supercritical branching populations.  Z. Wahrsch. Verw. Gebiete  65  (1984) 445--460
%\bibitem{83} Sagitov, S. Limit theorem for a critical branching process of general type. (Russian)  Mat. Zametki  34  (1983) 453--461. 
%\bibitem{86} Sagitov, S. Limit behavior of general branching processes. (Russian)  Mat. Zametki  39  (1986) 144--155, 159. 
\bibitem{90} Sagitov, S. A multidimensional critical branching process generated by a large number of particles of a single type. Theory Probab. Appl.  35  (1991) 118--130 
%\bibitem{94} Sagitov, S. Measure-branching renewal processes.  Stochastic Process. Appl.  52  (1994),  no. 2, 293--307
%\bibitem{95} Sagitov, S. Three limit theorems for reduced critical branching processes. Russian Math. Surveys  50  (1995),  no. 5, 1025--1043
%\bibitem{95b} Sagitov, S. A key limit theorem for critical branching processes. Stochastic Process. Appl. 56 (1995), no. 1, 87--100.
%\bibitem{97} Sagitov, S. Limit skeleton for critical Crump-Mode-Jagers branching processes.  Classical and modern branching processes (Minneapolis, MN, 1994),  295--303, IMA Vol. Math. Appl., 84, Springer, New York, 1997. 
%\bibitem{13} Sagitov, S.  Linear-fractional branching processes with countably many types. Stoch. Proc. Appl. 123 (2013) 2940-2956
\bibitem{Sev1} Sevast'yanov, B. A. Transient Phenomena in branching stochastic processes, Theory Probab. Appl., 4 (1959) 113–128
\bibitem{Seva} Sewastjanow, B.A. Verzweigungsprozesse, Akademie-Verlag, Berlin, 1974.
\bibitem{ZT} Taib, Z. Branching processes and neutral evolution. Lecture Notes in Biomathematics, 93. Springer-Verlag, Berlin, 1992
%\bibitem{T} Topchii, V. A. Properties of the probability of nonextinction of general critical branching processes under weak restrictions. Siberian Math. J. 28 (1987) 832--844.
%\bibitem{V77} Vatutin, V. A. Discrete limit distributions of the number of particles in critical Bellman-Harris branching processes. Theory Probab. Appl.    22  (1977) 146–152.
%\bibitem{V79} Vatutin, V. A. A new limit theorem for a critical Bellman-Harris branching process. (Russian)  Mat. Sb. (N.S.)  109(151)  (1979), no. 3, 440--452, 480. 

\bibitem{V86} Vatutin, V. A. Critical Bellman–Harris branching processes starting with a large number of particles. Math. Notes 40 (1986), 803–811.
\bibitem{V89} Vatutin, V. A. Asymptotic properties of Bellman-Harris critical branching processes starting with a large number of particles.
Journal of Soviet Mathematics  47, (1989) 2673–2681.
%\bibitem{V93} Vatutin, V. A. The total number of particles in a reduced Bellman-Harris branching process. Theory Probab. Appl.  38  (1993),  no. 3, 567--571 
%\bibitem{Z} Zubkov, A. M. Limit distributions of the distance to the nearest common ancestor. (Russian)  Teor. Verojatnost. i Primenen.  20  (1975), no. 3, 614--623
%\bibitem{Ya} Yakymiv, A. L. Multidimensional Tauberian theorems and their application to Bellman- Harris branching processes  Math. USSR Sb. 43  (1982), no. 3, 413--425
\end{thebibliography}
\end{document}